\documentclass[11pt, reqno]{amsart}
\usepackage{graphicx, amssymb, amsmath, amsthm, color, slashed}
\numberwithin{equation}{section}

\usepackage{hyperref}
\usepackage{amscd}
\usepackage{tikz}
\usepackage{tkz-euclide}
\usepackage{graphicx}
\usepackage{geometry}

\let\th=T

\newcommand{\R}{\Bbb R}
\newcommand{\N}{\mathbb N}

\def\({\left(}
\def\){\right)}
\def\<{\left\langle}
\def\>{\right\rangle}
\let\Re=\undefined\DeclareMathOperator*{\Re}{Re}

\newtheorem{theorem}{Theorem}[section]

\newtheorem{lemma}[theorem]{Lemma}

\newtheorem{corollary}[theorem]{Corollary}

\newtheorem{proposition}[theorem]{Proposition}

\theoremstyle{definition}
\newtheorem{definition}[theorem]{Definition}
\newtheorem{remark}[theorem]{Remark}

\theoremstyle{remark}

\begin{document}
	
	\title[Nonlinear Schr\"odinger equations]{ \bf Generic ill-posedness for  Schr\"odinger equation with power-type  nonlinearity on $\Bbb{S}^2$ }
	
	\author{Sijie Qian}
\address{Sijie Qian
\newline \indent The Graduate School of China Academy of Engineering Physics, Beijing 100088,\ P. R. China}
\email{qiansijie22@gscaep.ac.cn}
	\author{Yilin Song}
\address{Yilin Song
\newline \indent The Graduate School of China Academy of Engineering Physics, Beijing 100088,\ P. R. China}
\email{songyilin21@gscaep.ac.cn}
	
\author{Ruixiao Zhang}
\address{Ruixiao Zhang
\newline \indent The Graduate School of China Academy of Engineering Physics, Beijing 100088,\ P. R. China}
\email{zhangruixiao21@gscaep.ac.cn}
	
\author{Jiqiang Zheng}
\address{Jiqiang Zheng
\newline \indent Institute of Applied Physics and Computational Mathematics, Beijing, 100088, China.
\newline\indent
National Key Laboratory of Computational Physics, Beijing 100088, China}
\email{zheng\_jiqiang@iapcm.ac.cn, zhengjiqiang@gmail.com}

\begin{abstract}In this article, we investigate the local well-posedness of the nonlinear Schr\"odinger equation on the two-dimensional sphere $\mathbb{S}^2$:
\begin{align*}
i\partial_tu+\Delta_{g}u=F(u).
\end{align*}
The nonlinearity $F(u)$ is assumed to be gauge-invariant. More presicely, there exists a  function $V\in C^\infty(\Bbb C,\Bbb R)$ such that $F=\frac{\partial V}{\partial \bar{z}}$. Moreover, $V(z)$ obeys 
\begin{gather}\label{H-11}
  V(e^{i\theta}z)=V(z),\,\,\theta\in\Bbb R,\,\,z\in\Bbb C,
  |\partial_z^{k_1}\partial_{\bar{z}}^{k_2}V(z)|\leq C_{k_1,k_2}(1+|z|)^{1+\alpha-k_1-k_2},\tag{H-1}%
\end{gather}
for some $\alpha\geq3.$
The main  contribution of this paper is  the new lower bound of  threshold of local well-posedness $s_c(\Bbb S^2,\alpha)$.   Specifically, under assumption \eqref{H-11}, we prove that for $\alpha \geq 3$, the equation is ill-posed in $H^s(\mathbb{S}^2)$ with $s <  1 - \frac{2}{\alpha-1}$ in the sense that the norm inflation occurs. Combined with the well-posedness  in  Yang [Sci. China Math. 58 (2015), 1023-1046], the exact threshold $s_c(\Bbb S^2,\alpha)$ for $\alpha\geq5$ is $1-\frac{2}{\alpha-1}$, which matches the scaling-critical regularity as the Euclidean setting.   Moreover, for $\alpha \in [3, \frac{11}{3})$, we show that the solution map is not uniformly continuous  in the range $0 < s < \frac14$ for the power-type nonlinearity $F(u)=|u|^{\alpha-1}u$, which lies strictly above the scaling-invariant threshold.  This provides a new characterization of the ill-posedness regime for all $\alpha \geq 3$, extending an earlier result of Burq-G\'erard-Tzvetkov [Math. Res. Lett. 9 (2002), 323-335]. Our result can also be regarded as a Schr\"odinger counterpart of Xia [Int. Math. Res. Not. (2021), 15533-15554].
	\bigskip

\noindent \textbf{Mathematics Subject Classification (2020)} Primary: 35Q55; Secondary: 35L05.
\end{abstract}
	
\maketitle

\section{Introduction}
\subsection{Background and motivation}
We consider the Cauchy problem of the nonlinear Schr\"odinger equation(NLS)
\begin{equation}\label{fml-NLS}
\begin{cases}
(i\partial_{t}+\Delta_g)u=F(u) , &(t,x)\in\Bbb{R}\times M, \\
u(0,x)=u_{0}(x)\in H^s(M),
\end{cases}
\end{equation}
where $u:I\times M\to\Bbb C $ is the unknown function. The function $F$ satisfies $F(0)=0$. Then there exists a gauge-invariant function $V\in C^\infty(\Bbb C,\Bbb R)$ such that $F=\frac{\partial V}{\partial \bar{z}}$. Moreover, $V(z)$ obeys 
\begin{gather}\label{condition-2}
 V(e^{i\theta}z)=V(z),\,\,\theta\in\Bbb R,\,\,z\in\Bbb C,
  |\partial_z^{k_1}\partial_{\bar{z}}^{k_2}V(z)|\leq C_{k_1,k_2}(1+|z|)^{1+\alpha-k_1-k_2}.
\end{gather}
The number $\alpha$ usually measures the growth of nonlinear term and refers to the degree of $F$.
	
The nonlinear Schr\"odinger equation \eqref{fml-NLS} has been extensively investigated over the decades on various manifolds, including Euclidean space $\mathbb{R}^d$, the torus $\mathbb{T}^d$, and the sphere $\mathbb{S}^d$ (see, e.g., \cite{Bour,Burq1,Burq2,Caze,HTT-Duke,KM,KV1,Pausader,Tao} and the references therein). Here, we focus on the case where $M = \mathbb{S}^2$, the two-dimensional sphere endowed with the canonical metric $g$.
In the local coordinate system of $M$, the Laplace-Beltrami operator takes the form
\begin{align*}
\Delta_g=\dfrac{1}{\sqrt{|g|}}\sum\limits_{i,j=1}^2\dfrac{\partial}{\partial x_i}\Big(g^{ij}\sqrt{|g|}\dfrac\partial{\partial x_j}\Big).
\end{align*}

The Cauchy problem \eqref{fml-NLS} with $M=\Bbb S^2$ has a Hamiltonian structure and its solution satisfies the following two conservation laws:
\begin{gather}\label{mass}
M(u)=\int_{\Bbb{S}^2}\left|u\right|^2\,dx \equiv M(u)(0),  \\
E(u)=\frac12\int_{\Bbb{S}^2}\left|\nabla_g u\right|^2\,dx+{\int_{\Bbb{S}^2}V(u)\,dx} \equiv E(u)(0).\label{energy}
\end{gather}
	
When $M=\R^2$, the equation \eqref{fml-NLS} with $F(u)=|u|^{\alpha-1}u$ is invariant under the following scaling transformation
\begin{align*}
u_\lambda(t,x)=\lambda^\frac{2}{\alpha-1}u(\lambda^2t,\lambda x).
\end{align*}
A direct computation shows that $\|u_\lambda(0)\|_{\dot H^{s_c}(\mathbb{R}^2)} = \|u(0)\|_{\dot H^{s_c}(\mathbb{R}^2)}$ for $s_c := 1 - \frac{2}{\alpha-1}$. The problem is called $\dot H^{s_c}$-critical if $s=s_c$;  $\dot H^{s_c}$-subcritical if $s>s_c$;  $\dot H^{s_c}$-supercritical if $s<s_c$. For other nonlinearities with the same degree as $|u|^{\alpha-1}u$, we also refer  to $s_c$ as the critical regularity. 	
	
Next, we recall the definitions of local uniform well-posedness and ill-posedness for \eqref{fml-NLS} on $M$.
\begin{definition}[Local uniform well-posedness,\cite{CCT-03}]\label{def-well-pose}
The Cauchy problem \eqref{fml-NLS} is  well-posed in $H^s(M)$ if for any bounded subset $B$ in $H^s(M)$, there exists $T>0$ such that the solution map
\begin{align*}
u_0\in C^\infty(M)\cap B\mapsto u\in C(\left[-T,T\right],H^s(M))
\end{align*} 
is uniformly continuous when the source space is endowed with the $H^s$ norm and when the target space is endowed with
\begin{equation*}
\Vert u\Vert_{C(\left[-T,T\right],H^s(M))}=\sup_{\left|t\right|\leqslant T}\Vert u(t)\Vert_{H^s(M)}.
\end{equation*} 
\end{definition}
In contrast to the local well-posedness, we define two kinds of ill-posedness.
\begin{definition}[Not uniformly continuous]\label{def-ill-pose-weak}
The solution map of \eqref{fml-NLS} is not uniformly continuous in $H^s(M)$ if for any $0<\delta<1$ and $t>0$, there exist solutions $u,v$ to \eqref{fml-NLS} with initial data $u_0,v_0$ such that the following statements hold:
\begin{equation*}
\begin{aligned}
\|u_0\|_{H^s(M)} + \|v_0\|_{H^s(M)} &\lesssim 1,\\
\|u_0-v_0\|_{H^s(M)} &\lesssim \delta,\\
\sup_{0<\tau<t}\|u(\tau)-v(\tau)\|_{H^s(M)} &\gtrsim 1.
\end{aligned}
\end{equation*} 
\end{definition}
Next we define a stronger instability phenomenon describing  the rapid growth of norms.

\begin{definition}[Norm inflation]\label{def-strong-ill-posedness}
Let $s>0$, for any $\epsilon,t>0$, there exists solution $u$ to \eqref{fml-NLS} with initial data $u_0\in H^s(M)$ such that the following hold,
\begin{equation*}
\begin{gathered}
\|u_0\|_{H^s(M)} \lesssim \epsilon,\\
\sup_{0<\tau<t}\|u(\tau)\|_{H^s(M)} \gtrsim \epsilon^{-1}.
\end{gathered}
\end{equation*}
\end{definition}
	
Since $u\equiv 0$ solves \eqref{fml-NLS} with $u_0=0$, norm inflation implies that the solution map is not uniformly continuous directly. Therefore, it can be regarded as a stronger notion of ill-posedness.

Let us briefly review the well-posedness theory for \eqref{fml-NLS} on different manifolds $M$, where the underlying geometry plays a crucial role. In the study of local well-posedness, Strichartz estimates  
\begin{align}\label{Strichartz}
\big\|e^{it\Delta_g}u_0\big\|_{L_t^qL_x^r(I\times M)}\lesssim \|u_0\|_{H^s(M)}
\end{align} 
are crucial in establishing the local well-posedness, where $s\geq0$ and $q,r\in\R$. When $M=\R^2$, the Strichartz estimates hold  globally in time  for $s=0$ and $(q,r)$ satisfying the following admissible condition
\begin{align}\label{admissible}
\frac{2}{q}=2\left(\frac12-\frac1r\right),\,\,2<q<\infty,\,\,2\leq  r<\infty.
\end{align}  
These estimates rely heavily on the dispersive estimate
\begin{align*}
\big\|e^{it\Delta}u_0\big\|_{L_x^\infty(\R^2)}\lesssim|t|^{-1}\|u_0\|_{L^1(\R^2)}.
\end{align*}
Combining Strichartz estimates   with the fractional chain rule, Cazenave-Weissler  \cite{Caze} established the $ H^{s}$ local well-posedness for $s\geq \max\{0,s_c\}$ and $s\leq 1+\alpha$.   This result is sharp: for $s < s_c$, Christ-Colliander-Tao \cite{CCT-03} showed that the equation \eqref{fml-NLS} is  ill-posed in $H^s(\R^2)$ with $s<\max\{0,s_c\}$ and $\alpha \leq 1+k$ where $k\geq2$ is an integer. For the global well-posedness and scattering for cubic NLS on $\R^2$, we refer to Killip-Tao-Visan \cite{KVZ-JEMS} and Dodson \cite{Dodson}.
	
On compact manifolds, however, the dispersive estimate fails in general due to the presence of trapped geodesics, which inhibit dispersion over long times. For the  torus $M=\Bbb T^2$, Bourgain \cite{Bour} utilized the circle method with the  Stein-Tomas argument to derive the $L^p$-Strichartz estimate for $p \geq 4$:
\begin{align}\label{tori}
\big\|e^{it\Delta_{\Bbb T^2}}P_Nf\big\|_{L_{t,x}^p(I\times\Bbb T^2)}\lesssim N^{1-\frac{4}{p}+\varepsilon}\|f\|_{L^2(\Bbb T^2)},
\end{align}
where   $P_N$ is the spectral projector onto  frequencies near $N$. Notice that when $p>4$, the additional derivative loss will not occur.  Combining this estimate with the $X^{s,b}$ space, he proved the local well-posedness for \eqref{fml-NLS} with initial data $u_0\in H^1(\Bbb T^2)$ with $\alpha\in2\Bbb N+1$. In the cubic case ($\alpha=3$), one can obtain the local well-posedness in $H^s$ for arbitrary $s>0$. On the other hand,  Kishimoto \cite{Kishimoto} showed the solution map is not $C^3$ in $L^2$. On irrational tori  $\Bbb T_\theta^2:=\prod_{j=1}^2\R/(\alpha_j\Bbb Z)$, Guo-Oh-Wang \cite{GTW} obtained similar results to those in \cite{Bour}. For the hyperbolic Schr\"odinger case, i.e. $\Delta=\partial_{x_1}^2-\partial_{x_2}^2$, Wang \cite{Wang2} proved the sharp local well-posedness of cubic hyperbolic NLS for $u_0\in H^s(\Bbb T^2)$ with $s>\frac{1}{2}$. Very recently, Shen-Wang \cite{Shen} proved the norm inflation phenomenon for $s\leq\frac12$ with $s\neq0$. In the semi-periodic setting $\mathbb{R} \times \mathbb{T}$, Takaoka-Tzvetkov \cite{TT} established the $L_{t,x}^4$ Strichartz estimates without loss of derivatives and then showed the  local well-posedness for initial data $u_0\in L^2(\R\times\Bbb T)$, differing from the purely periodic case.
	
For general compact manifolds without boundary, Burq-G\'erard-Tzvetkov \cite{Burq1} constructed a parametrix for $e^{it\Delta_g}$ and derived a semiclassical dispersive estimate. Applying the abstract Strichartz estimate of Keel-Tao, the following holds for $(q,r)$ satisfying \eqref{admissible}
\begin{align}\label{Stri-M}
\big\|e^{it\Delta_g}f\big\|_{L_t^qL_x^r(I\times M)}\leq C(I,M)\|f\|_{H^\frac1q(M)}, 
\end{align} 
which incurs a derivative loss of order $\frac1q$. Compared to the Euclidean space, the Strichartz estimate must be local in time. As an application, they showed that the solution to \eqref{fml-NLS} is locally well-posed in $H^s(M)$ with $s>1-\frac{1}{\alpha-1}$.  On the sphere $\mathbb{S}^2$, however, the loss in \eqref{Stri-M} is not  sharp. In fact, for $p = q = 4$, they improved the loss to $\frac18$, and showed that the $\frac{1}{8}$-order  loss of derivative is optimal in \cite{Burq1}. In \cite{Burq2}, they further established bilinear eigenfunction estimates capturing high-low frequency interactions, leading to local well-posedness for the cubic NLS on $\mathbb{S}^2$ for $s > \frac14$. This 
result is sharp up to the endpoint: for $s < \frac14$, the flow map fails to be uniformly continuous. For sub-cubic nonlinearity, that is $|u|^{\alpha-1}u$ with $\alpha<3$, they also proved that the solution map is not uniformly continuous in $H^s(\Bbb S^2)$ with $s<\frac{1}{4}$. For more details, we refer to \cite{GerardICM}. However, there are few results concerning the instability between cubic and quintic. For general odd nonlinearities $\alpha \in 2\N+1$ with $\alpha \geq 5$, Yang \cite{Yang} proved local well-posedness for $s > 1 - \frac{2}{\alpha-1}$, matching the scaling exponent. For the nonlinear wave equation with general power, Burq-Tzvetkov \cite{Burq-Tzvektov} and Xia \cite{Xia} constructed the solution exhibiting the norm inflation phenomenon in the super-critical regime.

Let us now discuss more details on the instability of cubic NLS on spheres.  In \cite{Burq2001}, it was shown that the solution map to \eqref{fml-NLS}  with $\alpha=3$ is not uniformly continuous in $H^s(\mathbb{S}^2)$ for $0 \leq s < \frac14$. Banica \cite{Banica} later gave a more concrete instability construction. These instabilities are illustrated using highest-weight spherical harmonics concentrating near the equator:
\begin{align}
\phi_k(x)=k^{\frac{1}{4}-s}(x_1+ix_2)^k,\quad\text{ where }(x_1,x_2,x_3)\in\Bbb{S}^2,\forall k\in\Bbb{N}.
\end{align}
As shown in \cite{Han-JGA}, these functions maximize the $L^p$ norm (for $2 < p \le 6$) among $L^2$-normalized eigenfunctions in $E_k(\mathbb{S}^2)$, the $k$-th eigenspace of $-\Delta_{\mathbb{S}^2}$. Therefore, for the cubic nonlinearity,  using $\phi_k$ as initial data leads to ill-posedness.
	
For $p \geq 6$, however, the maximizer of $\frac{\|e_n\|_{L^p}}{\|e_n\|_{L^2}}$  is the zonal spherical harmonic function. The highest-weight functions $\phi_k$ is no longer sufficient  to prove ill-posedness below the scaling index $s_c = 1 - \frac{2}{\alpha-1} > \frac14$. To fully exploit the concentration phenomenon, one need to use the zonal spherical harmonic functions. Indeed, after a suitable scaling transformation
\begin{equation*}
\varphi(x) \rightarrow n^a\varphi(nx),
\end{equation*} 
of bump function $\varphi$ which is supported near the north pole, we obtain an $L^2$ normalized bump function which is supported in the region of radial $n^{-1}$ instead. Thus, it will simplify the analysis.

\begin{remark}
On higher-dimensional spheres $\Bbb S^d, d\ge3$, the maximizers of $\|e_n\|_{L^4(\Bbb S^d)}/\|e_n\|_{L^2(\Bbb S^d)}$ are always zonal spherical harmonic functions. Thus, in this case, using zonal spherical harmonic functions as initial data helps us to obtain sharp ill-posedness results for cubic NLS. For energy-supercritical NLS on $\mathbb{S}^3$, we refer to Burq-G\'erard-Tzvetkov \cite{Burq3} for ill-posedness in $H^1(\Bbb S^3)$.
\end{remark}

\subsection{Main result}

The main goal of this article is to study the instability for equation \eqref{fml-NLS}. More precise, we establish the ill-posedness for rough initial data. 

The first theorem reveals that for all $\alpha\geq3$, the norm inflation will occur for $0<s<s_c$ which is similar to the case of \cite{CCT-03}. 
\begin{theorem}[Strong ill-posedness]\label{thm-illpose}
Let $F(u) $ satisfy the condition \eqref{condition-2} with $\alpha\ge 3$ and $s<s_c=1-\frac{2}{\alpha-1}$, then the Cauchy problem \eqref{fml-NLS} is strongly ill-posed in $H^s(\Bbb S^2)$. In other word, there exist a positive sequence $\{t_n\}_{n\in\mathbb{N}}$ satisfying $t_n\to 0$ as $n\to\infty$ and a sequence of smooth function $\{u_n(t)\}_{n\in\mathbb{N}}$ such that the following statements hold:
\begin{enumerate}
\item $u_n(t)$ satisfies \eqref{fml-NLS} with initial data $u_n(0)$.
\item $\lim\limits_{n\to\infty}\|u_n(0)\|_{H^s(\Bbb S^2)} = 0$.
\item $\lim\limits_{n\to\infty}\|u_n(t_n)\|_{H^s(\Bbb S^2)} = \infty$ \label{item-unbound}.
\end{enumerate}
\end{theorem}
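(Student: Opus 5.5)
Following Christ--Colliander--Tao \cite{CCT-03}, the plan is a small-dispersion (ODE) approximation at a single point of $\Bbb S^2$. Fix a smooth bump $\varphi$ supported in a small geodesic ball around the north pole $N$. Work in normal coordinates near $N$ and set
\begin{align*}
u_n(0,x)=A_n\,\varphi(nx),\qquad A_n=\kappa_n\, n^{\frac{2}{\alpha-1}},
\end{align*}
where $\kappa_n\to0$ slowly (for instance a small power of $\log n$ inverse). A direct computation gives $\|u_n(0)\|_{H^s}\approx A_n n^{s-1}=\kappa_n n^{s-s_c}\to0$, since $s<s_c$ (for $s<0$ we still get smallness). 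The nonlinear time scale is $T_n\sim A_n^{-(\alpha-1)}=\kappa_n^{-(\alpha-1)}n^{-2}$. The dispersive time scale is $n^{-2}$. For $t\lesssim T_n$ we therefore compare $u_n$ with the ODE solution $v_n$ of $i\partial_t v=F(v)$, $v(0)=u_n(0)$. By gauge invariance $F(z)=z\,G(|z|^2)$ with $G$ real, so $v_n(t,x)=u_n(0,x)\exp\bigl(-itG(|u_n(0,x)|^2)\bigr)$ exactly, and $|v_n|$ is conserved pointwise.

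The first key step is phase decoherence in the ODE profile. By \eqref{condition-2} we need a nondegeneracy, say $G'(r)\sim r^{(\alpha-3)/2}$ for large $r$; the hypothesis only gives upper bounds, so I would reduce to the case where $V$ has genuine degree $\alpha$, and otherwise treat generic $V$. Under this, each $x$-derivative of the phase $tG(A_n^2|\varphi(nx)|^2)$ produces a factor of roughly $t A_n^{\alpha-1} n$. Hence $\|v_n(t)\|_{\dot H^{\sigma}}\gtrsim (tA_n^{\alpha-1})^{\sigma}\,A_n n^{\sigma-1}$ for $0\le\sigma\le1$ and $t A_n^{\alpha-1}\gg1$. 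Fixing $\sigma\in(\max(s,0),s_c)$ and interpolating (or using $L^2$ conservation for $s<0$), we choose $t_n$ with $t_nA_n^{\alpha-1}=\kappa_n^{-c}$. This makes $\|v_n(t_n)\|_{H^s}\gtrsim\kappa_n^{-c'}\to\infty$ while $t_n\to0$.

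The second key step, and the main obstacle, is the approximation $u_n\approx v_n$ in $H^s$ up to time $t_n$. Write $w=u_n-v_n$, which satisfies
\begin{align*}
i\partial_t w+\Delta_g w=F(v_n+w)-F(v_n)-\Delta_g v_n .
\end{align*}
I would use energy estimates in high Sobolev norms $H^k$, $k>1$, applied to the rescaled problem rather than Strichartz estimates. In the coordinates $y=nx$ and $\tau=n^2t$, the metric becomes $\delta+O(n^{-2}|y|^2)$ on the support, and the Laplacian is a harmless perturbation. The amplitude is then $\kappa_n$-large with an $n$-independent profile, so the problem is essentially the Euclidean small-dispersion problem of \cite{CCT-03} with dispersion parameter $\nu=\kappa_n^{-(\alpha-1)}\to0$. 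The Laplacian term $\Delta_gv_n$ acts as a source of size $\nu\,\|v_n\|_{H^{k+2}}$, which grows polynomially in $tA_n^{\alpha-1}$. A Gronwall argument in $H^k$, with $H^k\subset L^\infty$ controlling the nonlinear Lipschitz constant $\lesssim A_n^{\alpha-1}$, then gives $\|w\|_{H^k}\lesssim \nu\,(\text{polynomial in }\kappa_n^{-c})$ on the time window. Choosing $c$ small relative to the decay rate of $\kappa_n$ makes $w$ negligible after rescaling back to $H^s$.

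Localization is needed because of the sphere. The solution $u_n$ remains smooth, and its leakage outside the support region is controlled by the same $H^k$ bound, so no global geometry enters on this short time scale. The delicate bookkeeping is to balance the exponents $c$, $\kappa_n$ and $k$, including the case where $G$ is non-polynomial, and I expect this balancing to be the hardest step.
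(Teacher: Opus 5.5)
Your architecture matches the paper's: compare $u_n$ with the explicit ODE solution $v_n$, get decoherence from the phase, and control $w=u_n-v_n$ by a Gronwall energy estimate in a norm that controls $L^\infty$. You are also right that \eqref{condition-2} only gives upper bounds; the paper in effect just takes $F(z)=\langle z\rangle^{\alpha-1}z$. The gap is the choice of scaling, and it is fatal.

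With $A_n=\kappa_n n^{2/(\alpha-1)}$ and $\kappa_n\to0$, the rescaled data $\kappa_n\varphi(y)$ are small. The nonlinear time $A_n^{-(\alpha-1)}=\kappa_n^{-(\alpha-1)}n^{-2}$ is then longer than the dispersive time $n^{-2}$, and your $\nu=\kappa_n^{-(\alpha-1)}$ tends to $\infty$, not $0$. So the flow is essentially linear, and $u_n\approx v_n$ fails on the window $tA_n^{\alpha-1}\gg1$ where the phase decoheres. Even granting $u_n\approx v_n$, your own lower bound gives
\[
\|v_n(t_n)\|_{\dot H^s}\approx (t_nA_n^{\alpha-1})^{s}A_n n^{s-1}=\kappa_n^{1-cs}\,n^{s-s_c}\longrightarrow 0,
\]
so the asserted $\kappa_n^{-c'}$ drops the factor $n^{s-s_c}$. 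For $s\le0$, conservation of $|v_n|$ only yields the upper bound $\|v_n(t)\|_{H^s}\le\|u_n(0)\|_{L^2}$, so it gives no growth either.

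Switching to $\kappa_n\to\infty$ does not help, because the $H^k$ Gronwall factor is not polynomial. The Lipschitz constant of $F(v_n+\cdot)-F(v_n)$ in the $n$-scaled $H^k$ norm contains $\|\nabla^j v_n\|_{L^\infty}\sim A_n(n\,tA_n^{\alpha-1})^j$. The loss is therefore $\exp\bigl(C(tA_n^{\alpha-1})^{k+1}\bigr)$, and the approximation survives only for $tA_n^{\alpha-1}\lesssim(\log\nu^{-1})^{c}$. That is a logarithmic gain, which cannot beat the polynomially small initial norm $\kappa_n n^{s-s_c}$.

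The missing idea is to normalize the data in $H^s$ rather than at critical scaling. The paper takes $u_n(0)=\kappa_n n^{1-s}\varphi(nx)$ with $\kappa_n=(\log n)^{-\delta}$. Then $\|u_n(0)\|_{H^s}\sim\kappa_n$ is only logarithmically small, while the amplitude exceeds the critical one by the factor $\kappa_n n^{s_c-s}$. This is exactly where $s<s_c$ enters: the effective dispersion $n^2A_n^{-(\alpha-1)}=\kappa_n^{1-\alpha}n^{-(\alpha-1)(s_c-s)}$ is a negative power of $n$ (the prefactor $n^{2-(1-s)(\alpha-1)}$ in the paper's final bound). One can therefore afford $t_nA_n^{\alpha-1}=(\log n)^{c}$. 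This gives $\|v_n(t_n)\|_{H^s}\gtrsim(\log n)^{cs-\delta}\to\infty$ once $\delta<cs$, with Gronwall loss $\exp(C(\log n)^{3c})=n^{o(1)}$ at the $H^2$ level when $c<1/3$.

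Be aware that the paper's own choice $t_n=(\log n)n^{-(\alpha-1)(1-s)}$ means $t_nA_n^{\alpha-1}=(\log n)^{1-\delta(\alpha-1)}$. Its factor $\exp\bigl(t\,n^{(1-s)(\alpha-1)}(\log n)^{2(1-\delta\alpha)}\bigr)$ then equals $\exp\bigl((\log n)^{3-2\delta\alpha}\bigr)$, which $n^{2-(1-s)(\alpha-1)}$ does not beat; a smaller power of $\log n$ is needed there too.

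Finally, no localization or Euclidean rescaling is required. The paper runs the energy estimate globally on $\Bbb S^2$ for $E_n(u)^2=n^{2s}\|u\|_{L^2}^2+n^{2(s-2)}\|\Delta_g u\|_{L^2}^2$, using that $\Delta_g$ commutes with the linear flow and that $H^2(\Bbb S^2)\subset L^\infty$.
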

	
The second one is that when $\alpha\in[3,\frac{11}{3})$, the solution map is not uniform continuous with respect to time.  
\begin{theorem}[Ill-posedness]\label{thm-illpose2}
Let $F(u) = |u|^{\alpha-1}u$, $3\leq \alpha< \frac{11}{3}$ and $\frac{1}{4}-\frac{1}{2(\alpha-1)}<s<\frac{1}{4}$, then the Cauchy problem \eqref{fml-NLS} is ill-posed in $H^s(\Bbb S^2)$. In other words, there exists a positive sequence $\{t_n\}_{n\in\mathbb{N}}$ satisfies $t_n\to 0$ as $n\to\infty$ and two sequences of smooth functions $\{u_n(t)\}_{n\in\mathbb{N}},\{v_n(t)\}_{n\in\mathbb{N}}$ such that the following statements hold.
\begin{enumerate}
\item $u_n(t)$ and $v_n(t)$ satisfies \eqref{fml-NLS} with initial data $u_n(0)$ and $v_n(0)$, respectively.
\item $\lim\limits_{n\to\infty}\|u_n(0)-v_n(0)\|_{H^s(\Bbb S^2)} = 0$.
\item $\limsup\limits_{n\in \mathbb{N}}\|u_n(t_n)-v_n(t_n)\|_{H^s(\Bbb S^2)} \ge \frac{1}{2}$. 
\end{enumerate}
\end{theorem}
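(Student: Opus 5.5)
Theorem \ref{thm-illpose2} will be proved by the Burq--G\'erard--Tzvetkov mechanism: take two data that are close in $H^s$ and whose ODE phases separate. The construction uses the highest weight spherical harmonics $\psi_k=c_k(x_1+ix_2)^k$, normalized so that $\|\psi_k\|_{L^2}=1$. These satisfy $-\Delta_g\psi_k=k(k+1)\psi_k$ and $\|\psi_k\|_{L^\infty}\sim k^{1/4}$. They concentrate in a tube of width $k^{-1/2}$ around the equator, and $|\psi_k|$ is invariant under rotation about the $x_3$-axis.

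The initial data are
\[
u_n(0)=\kappa_n k^{-s}\psi_k,\qquad v_n(0)=\kappa_n' k^{-s}\psi_k,\qquad k=k_n\to\infty,
\]
with amplitudes $\kappa_n,\kappa_n'\sim 1$ and $\kappa_n-\kappa_n'\to0$. Then $\|u_n(0)-v_n(0)\|_{H^s}\sim|\kappa_n-\kappa_n'|\to0$, which gives item (2).

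\textbf{Step 1: ansatz.} Look for a solution of the form
\[
u_{\rm app}(t,x)=e^{-ik(k+1)t}\,a(t,x)\,\psi_k(x)/\|\psi_k\|_{L^\infty}.
\]
Because $\psi_k$ is concentrated and essentially one-dimensional near the equator, the nonlinearity $|u|^{\alpha-1}u$ acts to leading order as the ODE $i\partial_t a=|a|^{\alpha-1}a\,\mu_k(x)$. Here $\mu_k$ is the profile $|\psi_k|^{\alpha-1}/\|\psi_k\|^{\alpha-1}_{L^\infty}$. The ODE solution is
\[
a(t,x)=a_0\exp\!\big(-it|a_0|^{\alpha-1}\mu_k(x)\big),
\]
which is an explicit phase rotation. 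I will write the ansatz instead as
\[
u_{\rm app}=e^{-ik(k+1)t}\,\kappa k^{-s}\psi_k\exp\!\big(-it(\kappa k^{-s})^{\alpha-1}|\psi_k|^{\alpha-1}\big).
\]
The error comes from $\Delta_g$ hitting the phase factor. Since $|\psi_k|$ depends only on $x_3$ and varies on the scale $k^{-1/2}$, the transverse derivatives cost $k$. The cross term $\nabla\psi_k\cdot\nabla(\text{phase})$ vanishes to leading order, because $\nabla\psi_k$ is tangential to the equator while $\nabla|\psi_k|$ is normal to it. The phase has size $\Phi\sim t\,k^{(1/4-s)(\alpha-1)}$. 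The time scale that makes the phase of order $1$ is therefore
\[
t_n\sim k^{-(1/4-s)(\alpha-1)}\to0 .
\]
On this time scale the error in $L^2$ is about $t_n\cdot k\cdot(\text{phase derivatives})$. The lower bound $s>\frac14-\frac1{2(\alpha-1)}$ is exactly the condition $(\frac14-s)(\alpha-1)<\frac12$, which makes $t_n k^{1/2}$ small relative to the main terms. I expect this, together with the range $\alpha<\frac{11}{3}$, to be what closes the error estimate.

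\textbf{Step 2: stability.} Show that the true solution $u_n$ stays close to $u_{\rm app}$ in $H^s$ on $[0,t_n]$. I would use the $H^\sigma$ energy method with Sobolev/Brezis--Gallou\"et control of $L^\infty$, or the Burq--G\'erard--Tzvetkov bilinear Strichartz estimates for $s>\frac14-$. The perturbation $w=u-u_{\rm app}$ solves a linearized equation with forcing equal to the error from Step 1. A Gronwall argument is available because $\|u_{\rm app}\|_{L^\infty}^{\alpha-1}t_n\lesssim1$, which is exactly the definition of $t_n$. This gives $\|w\|_{L^2}\ll k^{-s}$ and a bound $\|w\|_{H^1}\lesssim k^{1-s}\cdot o(1)$. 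Interpolating these two yields $\|w\|_{H^s}=o(1)$. I expect this step to be the main obstacle. In particular, it requires smoothness of $|u|^{\alpha-1}u$ (fine for $\alpha\ge3$) and a careful count of the $k$-powers lost in the Gronwall exponent.

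\textbf{Step 3: decoherence.} Pick $\kappa_n'=\kappa_n(1+\epsilon_n)$ so that the phase difference
\[
t_n k^{(1/4-s)(\alpha-1)}\big(\kappa_n^{\alpha-1}-\kappa_n'^{\alpha-1}\big)|\psi_k|^{\alpha-1}/\|\psi_k\|_\infty^{\alpha-1}
\]
equals $\pi$ on the core region. This is possible by enlarging $t_n$ by the factor $\epsilon_n^{-1}$, with $\epsilon_n\to0$ slowly enough that Step 2 survives. The two approximate solutions are then nearly orthogonal on the core of $\psi_k$. Since both are concentrated at frequency $\sim k$, the $H^s$ norm is comparable to $k^s$ times the $L^2$ norm. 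This gives
\[
\|u_n(t_n)-v_n(t_n)\|_{H^s}\ge \|u_{\rm app}-v_{\rm app}\|_{H^s}-o(1)\ge\tfrac12
\]
after normalizing the amplitudes, which is item (3).
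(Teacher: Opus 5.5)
There is a genuine gap in Steps 1--2. The pointwise-ODE ansatz $u_{\mathrm{app}}=e^{-ik(k+1)t}\kappa k^{-s}\psi_k e^{-i\Phi}$, with $\Phi=t(\kappa k^{-s})^{\alpha-1}|\psi_k|^{\alpha-1}$, is not an approximate solution on the time scale $t_n\sim k^{-(\alpha-1)(\frac14-s)}$ at which $\Phi$ becomes of size one.

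The residual is $e^{-ik(k+1)t}\kappa k^{-s}e^{-i\Phi}\big(-2i\nabla\psi_k\cdot\nabla\Phi-i\psi_k\Delta_g\Phi-\psi_k|\nabla\Phi|^2\big)$. Since $\Phi$ varies on the transverse scale $k^{-1/2}$, this residual has size $\sim k\Phi\cdot\kappa k^{-s}|\psi_k|$. The cross term does not vanish: $\nabla\psi_k$ contains $e^{ik\varphi}\nabla|\psi_k|$, which is normal to the equator and of size $k^{1/2}|\psi_k|$ on the tube. Integrated over $[0,t_n]$, the residual gives a relative $L^2$ error of order $kt_n=k^{1-(\alpha-1)(\frac14-s)}\ge k^{1/3}\to\infty$, because $\frac{\alpha-1}{4}<\frac23$.

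Your remark about the lower bound on $s$ is reversed. The condition $(\frac14-s)(\alpha-1)<\frac12$ means $t_n\gg k^{-1/2}$, so $t_nk^{1/2}\to\infty$. Transverse dispersion of a width-$k^{-1/2}$ profile acts on the time scale $k^{-1}\ll t_n$.

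In fact Step 2 is false, not merely hard. By rotation equivariance and uniqueness, $R^*_\theta u(t)=e^{ik\theta}u(t)$, so $\pi_m u(t)=0$ for $m<k$. Subtracting $\frac{k(k+1)}{2}$ times the mass from the energy then gives
\[
(k+1)\sum_{m>k}\|\pi_m u(t)\|_{L^2}^2\le \tfrac{1}{\alpha+1}\|u_0\|^{\alpha+1}_{L^{\alpha+1}}\lesssim k^{\frac{\alpha-1}{4}-(\alpha+1)s}.
\]
Hence $u(t)$ stays, for all $t$, within relative $L^2$ distance $k^{\frac12((\alpha-1)(\frac14-s)-1)}\to0$ of the line $\mathbb{C}\psi_k$. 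By contrast, $\psi_k e^{-i\Phi}$ is at relative distance $\gtrsim 1$ from that line, since $\Phi$ varies by $O(1)$ across the mass of $\psi_k$. Therefore $\|u_n-u_{\mathrm{app}}\|_{L^2}\gtrsim k^{-s}$ at $t\sim t_n$, and no Gronwall or bilinear Strichartz argument can rescue the approximation.

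The missing idea is that, to leading order, the nonlinearity acts as a spatially constant frequency shift $\kappa^{\alpha-1}\omega_n$, not a pointwise phase. Here $\omega_n=\|\phi_n\|^{\alpha+1}_{L^{\alpha+1}}/\|\phi_n\|^2_{L^2}$ is the $\psi$-coefficient in $|\phi|^{\alpha-1}\phi=\omega\phi+r$ from Lemma \ref{lem-rotation}, and $r$ lives in degrees $>n$.

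This is what the paper does. It writes $u=\kappa h(t)(\phi+z(t)\phi+q(t))$ with $h(t)=\exp(-it(n(n+1)+\kappa^{\alpha-1}\omega))$ and $\operatorname{deg}q>n$. It controls $q$ by exactly the mass/energy/spectral-gap inequality above, using $m(m+1)-n(n+1)\gtrsim n$. It controls $z(t)$ by an ODE whose source terms, coming from $q$ and $r$, are what impose $s>\frac14-\frac1{2(\alpha-1)}$. The resulting approximation holds on intervals $[0,T_n]$ with $\omega_nT_n\to\infty$.

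Decoherence is then uniform in space. Choose $\kappa_n'-\kappa_n\to0$ so that $t\,\omega_n\big((\kappa_n')^{\alpha-1}-\kappa_n^{\alpha-1}\big)=\pi$ for some $t\le T_n$. This gives $\|u_n(t)-v_n(t)\|_{H^s}\approx(\kappa_n+\kappa_n')\|\phi_n\|_{H^s}$. Your Step 3 instead enlarges $t_n$ by $\epsilon_n^{-1}$, which would only make the pointwise-ansatz error worse.
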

	
\begin{remark}
If $\alpha\ge 5,\alpha\in 2\mathbb{N}+1$, combined with the positive result obtained by Yang \cite{Yang}, Theorem \ref{thm-illpose} is sharp up to the end-point.  We also note that  the remaining cases in Theorem \ref{thm-illpose2} can be contained in Theorem \ref{thm-illpose} if we do not  distinguish $\langle u\rangle^{\alpha-1}u$ and $|u|^{\alpha-1}u$. 
		
Consequently, when $|u|^{\alpha-1}u$ and $\langle u\rangle^{\alpha-1}u$ are not distinguished, the full characterization of well and ill-posedness for \eqref{fml-NLS} with $\alpha\ge 3$ is given by the following figure:
\begin{center}
			\begin{tikzpicture}[scale=1] 
				\draw[->] (-7,0) -- (6,0) node[anchor=north] {$\alpha$};
				\draw[->] (-6,-2) -- (-6,7)  node[anchor=east] {$s$};
				\draw (-6,0) node[below left]{O};
				
				\draw[dotted] (-6,6) -- node [left=6cm] {$s=1$} (6,6);
				\draw [dotted] (-2,0) -- (-2,2) ;
				
				\draw [thick, domain = -3: 6] plot (\x,{6-36/(11+\x)}) node [above=0.5cm,left] {$s_c = 1-\frac{2}{\alpha-1}$} ;
				\draw [dotted, domain = -5: -3] plot (\x,{6-36/(11+\x)});
				\draw [thick] (-3,3/2) -- node [left=2cm] {$s=\frac{1}{4}$} (-5,3/2); 
				\draw [-] (-5,7) -- (-5,0);
				\draw [dotted] (-6,3/2) -- (-5,3/2) ;
				\draw (-5,0) node [below=0.05cm] {$\alpha=3$};
				\draw (-3,0) node [below] {$\alpha=\frac{11}{3}$};
				\draw (1,0) node [below=0.05] {$\alpha=5$};
				\draw (-2,0) node [below=0.05cm] {$\alpha=4$};
				\draw [dotted] (-3,0) -- (-3,3/2);
				
				\draw (-6,5.5) node [right] {\textcolor{orange}{NUC}:Not uniformly continuous.};
				\draw (-6,5) node [right] {\textcolor{red}{NI}: Norm inflation.};
				\fill (-3,0) circle (1pt);
				\fill (1,0) circle (1pt);
				\fill (-5,0) circle (1pt);
				\fill (-2,0) circle (1pt);
				
				\draw [dotted] (1,0) -- (1,7) ;
				
				\draw (2,4) node {\textcolor{blue}{Well-posed}} ;
				\draw (-5,3/2) circle (1pt) ;
				\draw (0,3/2) node {\textcolor{red}{NI}} ;
				\draw (2,2) node {\textcolor{red}{NI}} ;
				\draw (-3.8,0.4) node {\textcolor{red}{NI}} ;
				\draw (-2.5,0.6) node {\textcolor{red}{NI}} ;
				\draw (-4.5,1) node {\textcolor{orange}{NUC}};
				\draw (-3,1.7)  ;
				
				\path (0,-1.3) node[xshift = 0.5cm](caption) {Figure 1: Ill and well posedness characterization for NLS posed on $\mathbb{S}^2$.} ;
			\end{tikzpicture}
		\end{center}
	\end{remark}
	
\begin{remark}
Compared to the Euclidean space, an interesting phenomenon is that the ill-posedness results exhibit a turning point at $\alpha=11/3$. In fact, this can be explained by combining the recent result obtained by Huang–Sogge \cite{Huang}. In their paper, they proved a sharp $L^{\alpha+1}_{t,x}$-type Strichartz estimates on Zoll manifolds with $\alpha \ge 1$:
\begin{equation}\label{fml-Stri-Lp}
\|e^{it\Delta_{\Bbb S^2}}u_0\|_{L^{\alpha+1}_{t,x}([0,1]\times \Bbb S^2)} \lesssim_\varepsilon
\|u_0\|_{H^{s_0(\alpha)+\varepsilon}(\Bbb S^2)},
\end{equation}
with
\begin{equation*}
s_0(\alpha)=\begin{cases}
\frac{1}{2}(\frac{1}{2}-\frac{1}{\alpha+1}),\,\, &\alpha\in(1,\frac{11}{3}),\\
1-\frac{4}{\alpha+1},\,\, &\alpha\in[\frac{11}{3},\infty].
\end{cases}
\end{equation*}
Moreover, $n\in \mathbb{N}$, $\beta\in C^\infty_0(\R)$ is a Littlewood-Paley cut off functions and denote $e_j(x)$ be the eigenfunction associated with the $j$-th eigenspace. They proved that the maximizer of \eqref{fml-Stri-Lp} is
\begin{equation*}
u_n=\begin{cases}
n^{\frac{1}{4}}(x_1+ix_2)^n,\,\, &\alpha\in(1,\frac{11}{3}),\\
\sum\limits_{j\in\mathbb{N}}n^{-1}\beta(n_j/n)e_j(x)\overline{e_j(x_0)},\,\,&\alpha\in[\frac{11}3{,\infty}].
\end{cases}
\end{equation*}
Here, $x_0$ denotes a fixed point in $\Bbb S^2$ and $n_j$ is the $j$-th eigenvalue associated with $\sqrt{-\Delta_{\mathbb S^2}}$.
		
In fact, the instability phenomenon is governed by the  potential energy $\|u\|^{\alpha+1}_{L^{\alpha+1}}$. Heuristically, on a short time interval,  the contribution of the Duhamel term is negligible in the sense of space-time averaging. Hence, we may approximate the solution 
$u$ by its linear part.
 Therefore, to find the initial data $u_0$ that most likely to cause instability, we require it to satisfy:
\begin{equation*}
\|e^{it\Delta_{\Bbb S^2}}u_0\|_{L^{\alpha+1}_{t,x}([0,1]\times \Bbb S^2)} \gtrsim \|u_0\|_{H^{s_0(\alpha)}(\Bbb S^2)}.
\end{equation*}
		
Therefore, we may reasonably conjecture that the turning point is induced by the properties of eigenfunctions on the spheres, rather technical reasons.
		
\end{remark}
	
	We end up this section by recalling some basic  definitions of Laplace-Beltrami operator and the associated Sobolev spaces.
	 Denote the spherical harmonic function by $Y_k^l$ , which is also known as the eigenfunction of Laplace-Beltrami operator on spheres, it  satisfies the following eigen-equation associated with the eigenvalue $\lambda_k=k(k+1)$,
\begin{align*}
	-\Delta_{\Bbb{S}^2}Y_{k}^l(x)=k(k+1)Y_{k}^l(x),\,-k\leq \ell\leq k,\quad\forall x\in\Bbb{S}^2.
\end{align*}
By using the spherical harmonic decomposition, we can decompose all $f\in L^2(\Bbb S^2)$ with
\begin{align*}
	f=\sum_{k\in\Bbb{Z}} {\pi_kf},
\end{align*}
where $\pi_k$ denotes the $k$-th projector on eigenspace $E_k=\operatorname{span}\{Y_{k}^\ell\}_{-k\leq\ell\leq k}$.

Let $s\geqslant0$, denote $H^s(\Bbb S^2)$ the Sobolev space associated with the operator $(Id-\Delta_g)^\frac{s}{2}$ equipped with the norm
\begin{align*}
	\Vert u\Vert_{H^s(\Bbb S^2)}=\Big(\sum_{k}\langle\lambda_k\rangle^{2s}\Vert \pi_ku\Vert_{L^2}^2\Big)^\frac{1}{2},
\end{align*}
where $\lambda_k=\sqrt{k(k+1)}$ and $\langle \cdot\rangle=(1+|\cdot|^2)^\frac12$.
	
\textbf{Organization of this paper}\hspace{2ex}The paper is organized as follows. In Section 2, we prove norm inflation phenomenon for \eqref{fml-NLS} which meets the scaling index when $\alpha>11/3$. In Section 3, we show the solution to \eqref{fml-NLS} is not uniformly continuous dependent in $H^s(\mathbb S^2)$ with $0<s<1/4$ when $3\leq\alpha<11/3$.
	
\subsection{Notations}
In this paper, we use $A\lesssim B$ to mean that there exists a constant such that $A\leqslant CB$, where the constant is not depending on $B$. We will also use $s+$ or $s-$, which means that there exists a small positive number such that $s+\varepsilon$ or $s-\varepsilon$ respectively.

	



\section{Norm inflation in regime $p\geq3$: the proof of Theorem \ref{thm-illpose}}
\begin{proof}[Proof of Theorem \ref{thm-illpose}]
Let $\kappa_n = (\log n)^{-\delta}$ for some $\delta > 0$, and define the initial data by $u_n(0) = \kappa_n n^{1-s} \varphi(nx)$, where $\varphi$ is a non-negative smooth function with compact support. Clearly,
\begin{equation*}
\|u_n(0)\|_{H^s(\Bbb S^2)} \sim \kappa_n \to 0,\quad \text{as}\;n\to\infty.
\end{equation*}
We set $f(z) := \langle z\rangle^{\alpha-1}$ and {$F(z)=\langle z\rangle^{\alpha-1}z$}. Then, the solution $v_n(t)$ to the ODE
\begin{equation*}
\begin{cases}
i\partial_t v_n = F(v_n),\\
v_n(0) = u_n(0),
\end{cases}
\end{equation*} 
is given explicitly by
\begin{equation*}
v_n(t) = \kappa_n n^{1-s} \varphi(nx) \exp\big(-itf(\kappa_n n^{1-s} \varphi(nx))\big).
\end{equation*}
		
For convenience, we define the time parameter by
\begin{equation}\label{fml-two-para}
t_n:=(\log n)n^{-(\alpha-1)(1-s)},\,\, 0<\delta<\frac{s}{1+s(\alpha-1)}.
\end{equation}
		
Performing the change of variable $y = nx$, we get the estimate  
\begin{align}\nonumber
\|\nabla_x v_n(t)\|_{L^2_x(\Bbb S^2)} &= \kappa_n n^{1-s} \big\|\nabla_y\big( \varphi(y)\exp\big( -itf(\kappa_n n^{1-s} \varphi(y)) \big) \big) \big\|_{L^2_y(\Bbb S^2)}\\\nonumber
&\ge \kappa_n n^{1-s} \Big( (\alpha-1) t \kappa_n^{\alpha-1} n^{(\alpha-1)-s)} \||\varphi(y)|^k\nabla_y \varphi(y)\|_{L^2_y(\Bbb S^2)} - \|\nabla_y \varphi(y)\|_{L^2_y(\Bbb S^2)} \Big)\\\label{fml-nabla-vn}
&\ge c(\alpha-1)t \kappa_n^{\alpha} n^{\alpha(1-s)} - C \kappa_n n^{1-s}.
\end{align}
{From the choice of $t_n,\kappa_n$ and $\delta$, one can verify that for sufficiently large $n$} 
\begin{equation*}
{c(\alpha-1)t \kappa_n^{\alpha} n^{\alpha(1-s)} \gg C \kappa_n n^{1-s},\,\, \forall \,\, 0<t<t_n.}
\end{equation*}
As a consequence, we have
\begin{equation*}
{\sup_{0<t<t_n}\|v_n(t)\|_{H^1_x(\Bbb S^2)} \sim \kappa_n^\alpha (\log n) n^{1-s}.}
\end{equation*}
Similarly, we also get
\begin{equation*}
{\sup_{0<t<t_n}\|v_n(t)\|_{H^2_x(\Bbb S^2)} \sim \kappa_n^{2\alpha-1} (\log n)^2 n^{2-s}.}
\end{equation*}
By using the interpolation inequality and \eqref{fml-two-para}, for all $0<s<1$ there exists $c=c(\alpha,\delta)>0$ such that
\begin{equation}\label{fml-vn-Hs}
\sup_{0<t<t_n}\|v_n(t)\|_{H^s(\Bbb S^2)} \ge \sup_{0<t<t_n}\|v_n(t)\|^{2-s}_{H^1(\Bbb S^2)}\cdot  \sup_{0<t<t_n}\|v_n(t)\|^{s-1}_{H^2(\Bbb S^2)} \sim (\log n)^c.
\end{equation}
		
Next, we aim to  show that $v_n(t)$ serves as a good approximation to the  solution $u_n(t)$. To this end, we introduce a modified energy   
\begin{equation*}
E_n(u) := \Big( n^{2s}\|u\|^2_{L^2(\Bbb S^2)} + n^{2(s-2)}\|\Delta_g u\|^2_{L^2(\Bbb S^2)} \Big)^{\frac{1}{2}}.
\end{equation*}
A direct computation shows that $\|u\|_{H^s(\mathbb{S}^2)} \lesssim E_n(u)$ for all $n \in \mathbb{N}$, and  
\begin{equation}\label{fml-deri-En}
\begin{aligned}
\frac{d}{dt} E^2_n(u) &= 2n^{2s} \int_{\Bbb S^2} \Re(u_t \cdot\overline{u})\, dx + 2n^{2(s-2)} \int_{\Bbb S^2} \Re(\Delta_g u_t \cdot\Delta_g\overline{u})\, dx\\
&\lesssim n^{2s} \|u_t\|_{L^2(\Bbb S^2)}\|u\|_{L^2(\Bbb S^2)} + n^{2(s-2)} \|\Delta_g u_t\|_{L^2(\Bbb S^2)}\|\Delta_g u\|_{L^2(\Bbb S^2)}.
\end{aligned}
\end{equation}
		
We claim that there exists $\epsilon>0$ such that  for all $t\in(0,t_n)$ there holds
\begin{equation}\label{fml-asser-energy-app}
\sup_{0<t<t_n}E_n(u_n(t)-v_n(t)) \lesssim n^{-\epsilon}.
\end{equation} 
Combining this with~\eqref{fml-nabla-vn} will  complete the proof of Theorem \ref{thm-illpose}.
		
It remains to show the claim \eqref{fml-asser-energy-app}. Let $w_n := u_n - v_n$, then it satisfies $w_n(0)=0$ and 
\begin{equation}\label{fml-wn-eq}
(i\partial_t + \Delta_g)w_n = F(u_n)-F(v_n)-\Delta_g v_n.
\end{equation}
By taking the derivatives on both sides of the above equation, we obtain
\begin{equation}\label{fml-nab-wn-eq}
(i\partial_t + \Delta_g)\Delta_g w_n =\Delta_g \big(F(u_n)-F(v_n)\big)-\Delta_g^2 v_n.
\end{equation}
For $m=1,2,\cdots,\lfloor \alpha-2\rfloor$, using the explicit formula of $v_n$, we have the pointwise bound
\begin{equation}\label{fml-v-deri-Linf}
\sup_{0<t<t_n}\|\nabla^m v_n(t)\|_{L^\infty(\Bbb S^2)} \lesssim n^{1-s+m} \big(\log n\big)^{m(1-\delta \alpha)}.
\end{equation}
The Gagliardo-Nirenberg inequality yields
\begin{equation}\label{fml-GN-ineq}
\|h\|_{L^\infty(\Bbb S^2)} \lesssim \|h\|^{\frac{1}{2}}_{H^2(\Bbb S^2)} \|h\|^{\frac{1}{2}}_{L^2(\Bbb S^2)} \lesssim n^{1-s} E_n(h).
\end{equation}
\eqref{fml-wn-eq} and the fact  
\begin{equation*}
F(u_n)-F(v_n) = \mathcal{O}(1+|v_n|^{\alpha-1}+|u_n|^{\alpha-1}),
\end{equation*}
imply 
\begin{align}\nonumber
n^{s}\| \mathcal{O}(1+|v_n|^{\alpha-1}+|u_n|^{\alpha-1}) w\|_{L^2(\Bbb S^2)} &\lesssim n^{s} \big( 1 + \|v_n\|^{\alpha-1}_{L^\infty(\Bbb S^2)} + \|u_n\|^{\alpha-1}_{L^\infty(\Bbb S^2)} \big) \|w_n\|_{L^2(\Bbb S^2)}\\\label{fml-En-T1}
&\lesssim n^{(1-s)(\alpha-1)} \big( E_n(w_n) + E^\alpha_n(w_n) \big).
\end{align}
		
Next, to estimate the difference term appeared in \eqref{fml-wn-eq}, we further expand it to 
\begin{equation*}
\begin{aligned}
\Delta_g \big(F(u_n)-F(v_n)\big) &= \Delta_g w_n \cdot \mathcal{O}\big( 1+|w_n|^{\alpha-1}+|v_n|^{\alpha-1} \big)\\
&\hspace{2ex}+ \nabla w_n \cdot \mathcal{O}\Big( \big( 1+|w_n|^{\alpha-2}+|v_n|^{\alpha-2} \big) \big( 1+|w_n|+|v_n|+|\nabla w_n|+|\nabla v_n| \big) \Big)\\
&\hspace{2ex}+ w_n \cdot \mathcal{O}\Big( \big( 1+|w_n|^{\alpha-3}+|v_n|^{\alpha-3} \big) \big( 1+|\nabla v_n|^2+(|w_n|+|v_n|)|\nabla^2 v_n| \big) \Big).
\end{aligned}
\end{equation*}
By Minkowski's inequality, we have 
\begin{equation*}
\begin{aligned}
&\big\|\Delta_g \big(F(u_n)-F(v_n)\big) \big\|_{L^2(\Bbb S^2)}\\
\lesssim& \Big\|\Delta w_n \cdot \mathcal{O}\big( 1+|w_n|^{\alpha-1}+|v_n|^{\alpha-1} \big) \Big\|_{L^2(\Bbb S^2)}\\
&+ \Big\|\nabla w_n \cdot \mathcal{O}\Big( \big( 1+|w_n|^{\alpha-2}+|v_n|^{\alpha-2} \big) \big( 1+|w_n|+|v_n|+|\nabla w_n|+|\nabla v_n| \big) \Big) \Big\|_{L^2(\Bbb S^2)}\\
&+ \Big\|w_n\cdot\mathcal{O}\Big( \big( 1+|w_n|^{\alpha-3}+|v_n|^{\alpha-3} \big) \big( 1+|\nabla v_n|^2+(|w_n|+|v_n|)|\nabla^2 v_n| \big) \Big)\Big\|_{L^2(\Bbb S^2)}\\
:=& I_1+I_2+I_3.
\end{aligned}
\end{equation*}
Thanks to \eqref{fml-v-deri-Linf} and \eqref{fml-GN-ineq}, we get a   bound similar  to \eqref{fml-En-T1}:
\begin{equation*}
n^{s-2}(I_1+I_3) \lesssim n^{(1-s)(\alpha-1)}(\log(n))^{2(1-\delta \alpha)} \big( E_n(w_n) + E^\alpha_n(w_n) \big).
\end{equation*}
For the term in $I_2$ involving $|\nabla w_n|^2$, it can be bounded by
\begin{equation*}
\begin{aligned}
&\big( 1 + \|v_n\|^{\alpha-2}_{L^\infty(\Bbb S^2)} + \|u_n\|^{\alpha-2}_{L^\infty(\Bbb S^2)} \big) \|\nabla w_n\|_{L^4(\Bbb S^2)}^2\\
\lesssim& \big( 1+n^{(\alpha-2)(1-s)}+n^{(\alpha-2)(1-s)} E^{\alpha-2}_n(w_n) \big) \|\nabla w_n\|^2_{H^{\frac{1}{2}}(\Bbb S^2)} \\
\lesssim& \big( 1+n^{(\alpha-2)(1-s)}+n^{(\alpha-2)(1-s)} E^{\alpha-2}_n(w_n) \big) n^{\frac{3}{2}-2s} E^2_n(w_n)\\
\lesssim& n^{\alpha(1-s)+1} ( E_n^2(w_n) + E_n^\alpha(w_n) ) .
\end{aligned}
\end{equation*}
Similarly, we get
\begin{equation}\label{fml-En-T2}
n^{s-2}\big\|\Delta_g \big(F(u_n)-F(v_n)\big) \big\|_{L^2(\Bbb S^2)} \lesssim n^{(\alpha-1)(1-s)} (\log(n))^{2(1-\delta \alpha)} ( E_n^2(w_n) + E_n^\alpha(w_n) ).
\end{equation}
		
For the source terms $v_n$, we have 
\begin{equation}\label{fml-En-T3}
n^s\|\Delta_g v_n\|_{L^2(\Bbb S^2)} + n^{s-2}\|\Delta_g^2 v_n\|_{L^2(\Bbb S^2)} \lesssim n^2 (\log n)^{4(1-\delta \alpha)}.
\end{equation}
Since $E_n(w_n(0)) = 0$ and $E_n(w_n)(t)$ is continuous with respect $t$, we have $E_n(w_n(t)) \leq 1$ for $t$ sufficiently small.  
		
Therefore, we combine \eqref{fml-En-T1}, \eqref{fml-En-T2} with \eqref{fml-En-T3} to obtain the energy increment estimate:
\begin{equation*}
\begin{aligned}
\frac{d}{dt} E_n^2(w_n) &\lesssim n^{(1-s)(\alpha-1)}(\log(n))^{2(1-\delta \alpha)}( E_n^2(w_n) + E_n^\alpha(w_n) ) + n^2 (\log n)^{4(1-\delta \alpha)}E_n(w_n(t))\\
&\lesssim n^{(1-s)(\alpha-1)}(\log(n))^{2(1-\delta \alpha)}( E_n^2(w_n) + E_n^\alpha(w_n) ) \\
&\hspace{4ex} + \Big( n^{\frac{(1-s)(\alpha-1)}{2}}(\log n)^{1-\delta \alpha} E_n(w_n) \cdot n^{2-\frac{(1-s)(\alpha-1)}{2}} (\log(n))^{3(1-\delta \alpha)}\Big)\\
&\lesssim n^{(1-s)(\alpha-1)}(\log(n))^{2(1-\delta \alpha)} E_n^2(w_n)  + \frac{n^4(\log n)^{8(1-\delta \alpha)}}{n^{(1-s)(1-\alpha)} (\log n)^{2(1-\delta \alpha)} }.
\end{aligned}
\end{equation*}
		
Let 
\begin{equation*}
\widetilde{E}_n(w_n) := \exp{\big(-t n^{(1-s)(\alpha-1)} (\log n)^{2(1-\delta \alpha)}\big) } E^2_n(w_n),
\end{equation*}
then the energy estimate becomes
\begin{equation*}
\frac{d}{dt} \widetilde{E}_n(w_n) \lesssim n^{4-(1-s)(\alpha-1)}(\log n)^{6(1-\delta \alpha)} \exp{\big(t n^{(1-s)(\alpha-1)} (\log n)^{2(1-\delta \alpha)}\big) }.
\end{equation*}
Integrating from $0$ to $t$ on both sides gives
\begin{equation*}
E_n(w_n(t)) \lesssim n^{2-(1-s)(\alpha-1)} (\log n)^{2(1-\delta \alpha)} \exp{\big(t n^{(1-s)(\alpha-1)} (\log n)^{2(1-\delta \alpha)}\big) }.
\end{equation*}
Since $0\le t \le t_n$ and $s<1-\frac{2}{\alpha-1}$, for sufficiently large $n$ there exists $\epsilon>0$ such that
\begin{equation*}
\sup_{0<t<t_n}E_n(w_n(t)) \lesssim n^{-\epsilon}.
\end{equation*}
		
Recall the lower bound we have obtained in \eqref{fml-vn-Hs}, then it follows that:
\begin{equation*}
\sup_{t<t<t_n}\|u_n(t)\|_{H^s(\Bbb S^2)} \gtrsim (\log n)^c - n^{-\epsilon} \to \infty,
\end{equation*}
as $n\to \infty$. So far, we have obtained \eqref{item-unbound} in Theorem \ref{thm-illpose}. 
		
Thus, we conclude the proof of Theorem \ref{thm-illpose}. 
\end{proof}
	


\section{Not uniformly continuous: the proof of Theorem \ref{thm-illpose2}}
In this section, we show that the solution map is not uniformly continuous for $s<\frac14$, which extends the work of \cite{Burq2001}.
	
For $n\ge0$, we denote by {$\pi_n$  the spectral projection on eigenspace $E_n$. For a function $f \in L^2(\Bbb S^2)$, we say that $\operatorname{degree}(f)>m$ if for any $n>m$ there holds $\pi_n f=0$. We also denote ${\psi(x)=(x_1+ix_2)^n}$ by the highest weight spherical harmonic functions.
	
For any $ \theta \in \R $, we define the rotation matrix $ R_{\theta} $ on $ \R^3 $ and the operator $ R_{\theta}^{*} $ on $ L^2(\Bbb S^2) $ by
\begin{equation*}
R_{\theta} := \left(
\begin{aligned}
& \cos \theta & ~~ -\sin \theta & \quad 0 ~~ \\
& \sin \theta & ~~ \cos \theta ~ & \quad 0 ~~ \\
& \quad 0 & 0 \quad & \quad 1 ~~
\end{aligned}\right)
\qquad \text{and} \qquad
\begin{aligned}
R_{\theta}^{*} : L^2(\Bbb S^2) & ~ \to L^2(\Bbb S^2) \\
f & ~ \mapsto R_{\theta}^{*}(f)(x) := f(R_{\theta}(x)).
\end{aligned}
\end{equation*}
Then, $ R_{\theta}^{*} $ is an unitary operator on $ L^2(\Bbb S^2) $ from the two facts below that $ \det(R_{\theta}) \equiv 1 $ and
\begin{equation*}
\begin{aligned}
\langle R_{\theta}^{*}(f), R_{\theta}^{*}(g) \rangle
= & ~ \int_{\Bbb S^2} R_{\theta}^{*}(f)(x) \overline{R_{\theta}^{*}(g)(x)} ~ dx
= \int_{\Bbb S^2} f(R_{\theta}(x)) \overline{g(R_{\theta}(x))} ~ dx \\
= & ~ \int_{\Bbb S^2} f(y) \overline{g(y)} ~ \vert \det(R_{\theta}) \vert^{-1} dy
= \langle f, g \rangle.
\end{aligned}
\end{equation*}
	
We recall a decomposition lemma of Burq-G\'erard-Tzvetkov \cite{Burq2001}, in which the operator $R^*_\theta$ is used to exclude low frequency for a given function $f\in L^2(\Bbb S^2)$.

\begin{lemma}[Decomposition of function, \cite{Burq2001}]\label{lem-rotation}
Let $f\in L^2(\Bbb S^2)$ such that for all $\theta\in\R$ there holds
\begin{equation*}
R^*_\theta f = e^{in\theta} f.
\end{equation*}
Then, there exists $\alpha_1\in\Bbb C$ and $g$ with $\pi_m g=0$ for all $m>n$ such that
\begin{equation*}
f = \alpha_1 \psi + g.
\end{equation*}
\end{lemma}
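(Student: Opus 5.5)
The plan is to diagonalize the rotation action on each eigenspace separately. Since $R_\theta$ is an isometry of $\Bbb S^2$, the operator $R^*_\theta$ commutes with $\Delta_{\Bbb S^2}$. Hence it preserves every eigenspace $E_m$ and commutes with every projector $\pi_m$. Applying $\pi_m$ to the hypothesis $R^*_\theta f=e^{in\theta}f$ gives
\begin{equation*}
R^*_\theta(\pi_m f)=e^{in\theta}\,\pi_m f\qquad\text{for all }\theta\in\R,\ m\ge 0 .
\end{equation*}
So each component $\pi_m f$ lies in the weight space $W_{m,n}:=\{h\in E_m:\ R^*_\theta h=e^{in\theta}h\ \forall\theta\}$. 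The lemma then reduces to identifying $W_{m,n}$ for each $m$.

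\textbf{Identifying the weight spaces.} I will use spherical coordinates $x=(\sin\vartheta\cos\phi,\sin\vartheta\sin\phi,\cos\vartheta)$. In these coordinates $R_\theta$ acts by $\phi\mapsto\phi+\theta$, so $R^*_\theta h(\vartheta,\phi)=h(\vartheta,\phi+\theta)$. The equivariance condition therefore forces $h(\vartheta,\phi)=e^{in\phi}P(\vartheta)$. The standard basis $Y_m^\ell(\vartheta,\phi)=c_{m,\ell}P_m^{|\ell|}(\cos\vartheta)e^{i\ell\phi}$, with $|\ell|\le m$, is exactly the basis of joint eigenfunctions of $\Delta$ and of the rotation generator $\partial_\phi$. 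From this:
\begin{itemize}
\item if $m<|n|$, then $W_{m,n}=\{0\}$;
\item if $m\ge|n|$, then $W_{m,n}=\mathbb C\,Y_m^n$.
\end{itemize}
To make this self-contained, the one step needing an argument is one-dimensionality: any $e^{in\phi}P(\vartheta)$ in $E_m$ solves the associated Legendre ODE, whose solution space regular at both poles is one-dimensional.

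\textbf{The $m=n$ component.} For $m=n$ I check directly that $\psi=(x_1+ix_2)^n$ has the right weight. Writing $x_1+ix_2=\sin\vartheta\,e^{i\phi}$ gives $R^*_\theta\psi=e^{in\theta}\psi$. Moreover $\psi$ is the restriction of a harmonic homogeneous polynomial of degree $n$, so $\psi\in E_n$, and hence $W_{n,n}=\mathbb C\psi$.

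\textbf{Conclusion.} Expanding $f=\sum_m\pi_m f$ in $L^2$ gives
\begin{equation*}
f=\alpha_1\psi+\sum_{m>n}c_m Y_m^n,\qquad \alpha_1\in\Bbb C,
\end{equation*}
with all components for $m<n$ vanishing. So $g:=f-\alpha_1\psi$ has spectral support only in degrees $m>n$, that is, $\pi_m g=0$ for all $m\le n$. This is the form in which the decomposition is used to exclude low frequencies.

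\textbf{Main obstacle.} The only genuinely delicate point is matching the conclusion to the statement as printed. The argument yields $\pi_m g=0$ for $m\le n$, while the statement reads ``$m>n$''. I believe the intended meaning is that $g$ has no components of degree $\le n$, so that $g$ has degree $>n$ in the paper's terminology. I would verify this against how the lemma is applied before fixing the wording.
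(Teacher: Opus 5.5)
Your proof is correct. The paper gives no proof of this lemma and only cites Burq--G\'erard--Tzvetkov, so there is nothing in the paper to compare against directly. Your argument is the standard one: $R^*_\theta$ commutes with $\Delta_g$, hence with each $\pi_m$, so every $\pi_m f$ is a weight-$n$ vector in $E_m$, and it remains to identify such vectors for $m\le n$.

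For that last step there is a purely algebraic shortcut that avoids both the Legendre ODE and the explicit basis. Write an element of $E_m$ as a harmonic homogeneous polynomial of degree $m$ in $z=x_1+ix_2$, $\bar z$, $x_3$. Since $R^*_\theta$ multiplies $z^a\bar z^b x_3^c$ by $e^{i(a-b)\theta}$, weight $n$ forces $a-b=n$, so $a\ge n$. This kills every component with $m<n$ and leaves only $c\,z^n=c\,\psi$ when $m=n$. Your route gives more than the lemma needs, namely $\pi_m f\in\mathbb{C}Y^n_m$ for $m>n$.

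Note also that once you grant that $\{Y^\ell_m\}_{|\ell|\le m}$ is a basis of $E_m$ with $R^*_\theta Y^\ell_m=e^{i\ell\theta}Y^\ell_m$, comparing coefficients of the distinct characters $e^{i\ell\theta}$ already identifies $W_{m,n}$. The ODE step is therefore redundant. As written, your claim ``the solution space regular at both poles is one-dimensional'' also holds only for $m\ge|n|$; for $m<|n|$ that space is zero.

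Your reading of the conclusion is the right one. As printed, ``$\pi_m g=0$ for all $m>n$'' is false: take $f=Y^n_{n+1}$. What the paper actually uses is that $\pi_m g=0$ for $m\le n$, which is exactly what you proved. This appears in three places:
\begin{itemize}
\item the formula $\alpha_1=\|\psi\|^{\alpha+1}_{L^{\alpha+1}}/\|\psi\|^2_{L^2}$ requires $g\perp\psi$;
\item the proof of Proposition~\ref{prop-ansatz} uses that $q$ and $r$ are orthogonal to $\phi\in E_n$;
\item the same proof uses the gap $m(m+1)-n(n+1)\ge Cn$ for the components $m\ge n+1$ of $q$.
\end{itemize}
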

	
For $ \psi(x)=(x_1+ix_2)^n $, one can compute that
\begin{equation*}
\begin{aligned}
R_{\theta}^{*}(\psi)(x)= & ~ \psi(R_{\theta}(x))= \psi(x_1 \cos \theta - x_2 \sin \theta, x_1 \sin \theta + x_2 \cos \theta, x_3) \\
= & ~ [(x_1 \cos \theta - x_2 \sin \theta) + i (x_1 \sin \theta + x_2 \cos \theta)]^n \\
= & ~ [x_1 (\cos \theta + i \sin \theta) + i x_2 (\cos \theta + i \sin \theta)]^n= e^{in\theta} \psi(x).
\end{aligned}
\end{equation*}
Furthermore, let $f(u):= |u|^{\alpha-1} u$, one can verify that 
\begin{equation*}
R_{\theta}^{*}(f(\psi))(x) = |\psi(R_{\theta}(x))|^{\alpha-1} \psi(R_{\theta}(x)) = |e^{in\theta} \psi(x)|^{\alpha-1} (e^{in \theta} \psi(x)) = e^{in\theta}(f(\psi))(x).
\end{equation*}
By Lemma \ref{lem-rotation}, there exists $ \alpha_1 \in \Bbb C $ and $ g $ with $\operatorname{degree}(g)>n$ such that
\begin{equation*}
f(\psi) = \alpha_1 \psi + g,
\end{equation*}
with
\begin{equation*}
\alpha_1 = \frac{\|\psi\|^{\alpha+1}_{L^{\alpha+1}}}{\|\psi\|^2_{L^{2}}}.
\end{equation*}
	
As a further application of Lemma \ref{lem-rotation}, we can decompose the solution to \eqref{fml-NLS} with initial data which belongs to highest weight spherical harmonic functions.
\begin{corollary}\label{cor-decomp}
Let $\alpha_1\in\Bbb C$ and $\psi(x)=(x_1+ix_2)^n$ and assume $u\in C([-T,T];H^s(\Bbb S^2))$ satisfies \eqref{fml-NLS} with $u_0 = \alpha_1\psi$ for some $\alpha_1>0$. Then, for every $t\in [-T,T]$
\begin{equation*}
u(t,x) = \alpha_1(t)\psi + r(t),
\end{equation*}
with $\operatorname{deg}(r)>n$.
\end{corollary}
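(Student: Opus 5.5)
The plan is to derive the statement from the rotational symmetry of the equation together with uniqueness of solutions, and then read off the spectral structure of $u(t)$ from Lemma \ref{lem-rotation}. The claim to prove is that $r(t)$ has nonzero components only in the eigenspaces $E_m$ with $m>n$.

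\emph{Step 1 (symmetries of the equation).}
\begin{itemize}
\item Since $R_\theta$ is an isometry of $\Bbb S^2$, the operator $R_\theta^*$ commutes with $\Delta_g$. It also preserves every eigenspace $E_m$, hence every $H^s$ norm.
\item Gauge invariance $V(e^{i\phi}z)=V(z)$ gives, after differentiating in $\bar z$, the identity $F(e^{i\phi}z)=e^{i\phi}F(z)$. This holds in particular for $F(u)=|u|^{\alpha-1}u$.
\item Consequently, if $u$ solves \eqref{fml-NLS} on $[-T,T]$, then for each fixed $\theta$ the function
\[
u_\theta(t):=e^{-in\theta}R_\theta^*u(t)
\]
again solves \eqref{fml-NLS}. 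Its initial datum is $e^{-in\theta}R_\theta^*(\alpha_1\psi)=\alpha_1\psi=u_0$, by the computation $R_\theta^*\psi=e^{in\theta}\psi$ carried out above.
\end{itemize}

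\emph{Step 2 (uniqueness).} Since $u_\theta$ and $u$ solve the same problem with the same data, uniqueness gives $u_\theta=u$, that is,
\[
R_\theta^*u(t)=e^{in\theta}u(t)\quad\text{for all }\theta\in\R,\ t\in[-T,T].
\]
I expect this to be the main obstacle, because uniqueness in $C([-T,T];H^s)$ with $s<\frac14$ is not available in general. I would handle it as follows.
\begin{itemize}
\item Take $u$ to be the solution produced by the local theory. The datum $\psi$ is smooth, so $u$ is smooth by persistence of regularity, for instance in $H^2$ via the energy method used in Section 2.
\item In $C([-T,T];H^2)\subset C([-T,T];L^\infty)$ uniqueness is elementary. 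Write $w=u-u_\theta$ and run an $L^2$ energy estimate: $F$ is locally Lipschitz on bounded sets, so
\[
\tfrac{d}{dt}\|w\|_{L^2}^2\lesssim \big(1+\|u\|_{L^\infty}+\|u_\theta\|_{L^\infty}\big)^{\alpha-1}\|w\|_{L^2}^2 ,
\]
and Gronwall gives $w\equiv0$.
\item The hypothesis ``$u$ satisfies \eqref{fml-NLS}'' in the corollary is therefore read as referring to this (smooth) solution, which is all that is used in the proof of Theorem \ref{thm-illpose2}.
\end{itemize}

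\emph{Step 3 (spectral conclusion).} Fix $t$ and apply Lemma \ref{lem-rotation} to $f=u(t)$. To see exactly which modes survive, decompose $u(t)=\sum_m\pi_mu(t)$. Since $R_\theta^*$ commutes with each $\pi_m$, every piece satisfies $R_\theta^*\pi_mu=e^{in\theta}\pi_mu$.
\begin{itemize}
\item In $E_m$ the $e^{in\theta}$-eigenspace of the rotation group is spanned by the weight-$n$ harmonic $Y_m^n$. This space is zero when $m<n$, since weights in $E_m$ range only over $|\ell|\le m$.
\item For $m=n$ it is spanned by $\psi=(x_1+ix_2)^n$.
\end{itemize}
Hence $\pi_mu(t)=0$ for $m<n$ and $\pi_nu(t)=\alpha_1(t)\psi$ for some scalar $\alpha_1(t)$. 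Setting $r(t):=\sum_{m>n}\pi_mu(t)$ gives $u(t)=\alpha_1(t)\psi+r(t)$ with $r$ supported on frequencies strictly above $n$, as claimed.

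Finally, $\alpha_1(t)=\langle u(t),\psi\rangle/\|\psi\|_{L^2}^2$ is continuous in $t$ because $u\in C([-T,T];H^s)$, and $\alpha_1(0)=\alpha_1$.
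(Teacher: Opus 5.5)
Your proposal is correct and follows the standard Burq--G\'erard--Tzvetkov argument that the paper invokes without writing out: rotation and phase invariance plus uniqueness give $R_\theta^*u(t)=e^{in\theta}u(t)$, and the weight structure of each $E_m$ then forces $\pi_m u(t)=0$ for $m<n$ and $\pi_n u(t)\in\operatorname{span}\{\psi\}$. Two of your additions are appropriate refinements rather than a different route: restricting to the smooth ($H^2$) solution so that uniqueness actually holds, and deriving the weight-space conclusion directly, which also corrects the reversed frequency condition in the printed statement of Lemma \ref{lem-rotation}.
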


Using the above decomposition, we construct the ansatz to \eqref{fml-NLS} as follows.
\begin{proposition}\label{prop-ansatz}
Let $T>0$, $\kappa\in(0,1)$, $s<\frac{1}{4}$. For $n\in\N$, denote that $\phi_n(x)=n^{\frac{1}{4}-s}\psi(x)$ and $\omega_n = \|\phi_n\|_{L^{\alpha+1}(\Bbb S^2)}/\|\phi_n\|_{L^2(\Bbb S^2)} \sim n^{(\alpha-1)(\frac{1}{4}-s)}$. Then, suppose that 
\begin{equation*}
\lim\limits_{n\to\infty} \omega_n T_n = \infty,
\end{equation*}
the solution $u_n(t)$ to \eqref{fml-NLS} with initial data $\kappa\phi_n(x)$ can be decomposed into
\begin{equation}\label{fml-ansa}
u(t,x) = \kappa \exp\big( -it(n(n+1)+\kappa^{\alpha-1}\omega_n) \big)(\phi_n+r_n(t)),\quad \forall t\in[0,T_n].
\end{equation}
Here, the remainder $r_n(t)$ satisfies
\begin{equation}\label{fml-ansa-rem}
\sup_{0\le t\le T_n}\|r_n(t)\|_{H^s(\Bbb S^2)}\to0,\,\,\text{as}\,\, n\to \infty.
\end{equation}
Moreover, there exists uniform constant $C>0$ such that
\begin{equation}\label{fml-Hs-bound}
\|u_n\|_{L^\infty(\R;H^s(\Bbb S^2))} \le C\kappa.
\end{equation}
\end{proposition}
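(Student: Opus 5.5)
The plan is to compare $u_n$ with the explicit ansatz
\[
u^{\rm app}_n(t)=\kappa e^{-it(n(n+1)+\kappa^{\alpha-1}\omega_n)}\phi_n ,
\]
where $\omega_n$ is read as the coefficient $\alpha_1$ computed above. That is,
\[
F(\phi_n)=\omega_n\phi_n+g_n,\qquad \omega_n=\|\phi_n\|^{\alpha+1}_{L^{\alpha+1}}/\|\phi_n\|^2_{L^2},\qquad \operatorname{degree}(g_n)>n .
\]
I will prove \eqref{fml-Hs-bound} first, since it only uses conservation laws. Mass conservation gives $\|u_n(t)\|_{L^2}=\kappa\|\phi_n\|_{L^2}\sim\kappa n^{-s}$. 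Because $V\ge0$ in the defocusing power case, energy conservation gives
\[
\|\nabla u_n(t)\|^2_{L^2}\le 2E(u_n(0))\lesssim \kappa^2 n^{2-2s}\bigl(1+\kappa^{\alpha-1}\omega_n n^{-2}\bigr)\lesssim\kappa^2n^{2-2s}.
\]
Interpolating between $L^2$ and $H^1$ then yields $\|u_n(t)\|_{H^s}\lesssim \kappa\, n^{-s(1-s)}n^{s(1-s)}=\kappa$ uniformly in $t\in\R$.

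For \eqref{fml-ansa}, I define $r_n$ by \eqref{fml-ansa}. By Corollary \ref{cor-decomp}, $r_n=(a_n(t)-1)\phi_n+\rho_n$, where $\rho_n$ has only modes $k>n$. Mass conservation shows $|a_n|$ stays close to $1$ as long as $\rho_n$ is small in $L^2$. After conjugating out the phase, $r_n$ solves
\[
i\partial_t r_n+(\Delta_g+n(n+1)+\kappa^{\alpha-1}\omega_n)r_n=\kappa^{\alpha-1}\bigl[F(\phi_n+r_n)-F(\phi_n)\bigr]+\kappa^{\alpha-1}g_n ,\qquad r_n(0)=0 .
\]
Two features drive the estimate. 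The first is the source $g_n$. It has no component on $E_n$, and on $E_k$ with $k>n$ the phase $k(k+1)-n(n+1)\ge 2n$ is non-resonant. I will therefore integrate by parts in time in the Duhamel formula (a normal form step). This gains a factor $n^{-1}$, at the cost of a term involving $\partial_t$ of the nonlinearity, which is of size $\omega_n$ times the unknown. The second is the linear part of $F(\phi_n+r_n)-F(\phi_n)$. It is bounded pointwise by $|\phi_n|^{\alpha-1}|r_n|$, and since $\|\phi_n\|_{L^\infty}=n^{\frac14-s}$, this gives a Gronwall rate $\kappa^{\alpha-1}n^{(\alpha-1)(\frac14-s)}\sim\kappa^{\alpha-1}\omega_n$.

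I will run a Gronwall argument for an $n$-adapted energy, analogous to $E_n$ in Section 2:
\[
\mathcal E_n(r)=\bigl(n^{2s}\|r\|_{L^2}^2+n^{2(s-1)}\|\nabla r\|^2_{L^2}\bigr)^{1/2}\gtrsim\|r\|_{H^s}.
\]
The higher-order terms in $r_n$ are controlled by $\|r_n\|_{L^\infty}\lesssim n^{\frac14-s}\mathcal E_n(r_n)$ (up to $n^{0+}$ losses), using Sobolev/Gagliardo--Nirenberg at scale $n$ in the same way as \eqref{fml-GN-ineq}. This would lead to a bound of the form
\[
\mathcal E_n(r_n(t))\lesssim n^{-1+\frac14-s+(\alpha-1)(\frac14-s)}\,(1+\omega_n t)\,e^{C\kappa^{\alpha-1}\omega_n t},
\]
up to the bootstrap condition $\mathcal E_n\le1$.

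Finally, I choose $T_n$ with $\omega_nT_n\to\infty$ slowly, e.g. $\omega_nT_n=\epsilon\log n$ with $\epsilon$ small. The exponential factor is then $n^{C\epsilon\kappa^{\alpha-1}}$, which is absorbed provided the power in front is negative. That is exactly where the restriction $s>\frac14-\frac1{2(\alpha-1)}$ enters: it makes $\omega_n\ll n^{1/2}$, so the non-resonant gain $n^{-1}$ beats the amplification. This yields \eqref{fml-ansa-rem}.

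The main obstacle is the normal-form/energy step. The boundary and commutator terms produced by integrating by parts must be bounded in $H^s$ with $s<\frac14$, where no Strichartz-based well-posedness is available. So everything has to be done with the crude $L^\infty$ bound on $\phi_n$, the exact rotational structure $R^*_\theta$ (Lemma \ref{lem-rotation}), and the degree separation. If the full normal form is too lossy, I would first try to exploit that $\rho_n$ is supported on modes $k>n$ and estimate it only in $L^2$, using the phase gap of size $n$ directly in an averaged Duhamel bound.
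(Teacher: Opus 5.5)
Your proof of \eqref{fml-Hs-bound} is the same as the paper's: mass, energy with $V\ge 0$, then interpolation. For \eqref{fml-ansa-rem} you take a different route, and as written it fails at the claim $\|r_n\|_{L^\infty}\lesssim n^{\frac14-s}\mathcal E_n(r_n)$. That bound is false. The number $n^{\frac14-s}$ is the sup norm of $\phi_n$ itself, which owes it to having vertical frequency only $n^{1/2}$; $\mathcal E_n$ carries no such information about $r_n$. An $H^1$-level energy gives no $L^\infty$ control at all on a surface. Using the $R_\theta^*$-covariance you can write $r_n=e^{in\varphi}f(t,x_3)$, and a one-dimensional Sobolev inequality in $x_3$ then gives only $\|r_n\|_{L^\infty}\lesssim n^{\frac12-s}\mathcal E_n(r_n)$. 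This is sharp: take $f$ a bump of height $n^{\frac12-s}$ and width $n^{-1}$ at $x_3=0$, for which $\mathcal E_n\sim 1$. Without the symmetry, an $H^2$-level energy and \eqref{fml-GN-ineq} give only $n^{1-s}$. With the correct bound, the quadratic term $|\phi_n|^{\alpha-2}|r_n|^2$ contributes a Gronwall rate of size $\omega_n n^{1/4}\mathcal E_n$, and this is attained by the bump above. So under your bootstrap hypothesis $\mathcal E_n\le 1$ the amplification over $\omega_nT_n=\epsilon\log n$ is $\exp(c\,n^{1/4}\log n)$, and the argument does not close.

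The repair is to bootstrap $\mathcal E_n(r_n)\le n^{-1/4}$ instead. Under that hypothesis every nonlinear term has rate $\lesssim\omega_n$; for the $\dot H^1$ part you also need $\|\nabla F'(\phi_n)\|_{L^\infty}\lesssim n\omega_n$. You then check that the linear bound beats $n^{-1/4}$. The normal-form step you flag as the main obstacle is harmless. The source $g_n$ is time independent and lives on modes $k>n$, where $L:=n(n+1)+\kappa^{\alpha-1}\omega_n-k(k+1)\le -n$. So you can subtract the explicit corrector $\kappa^{\alpha-1}L^{-1}(1-e^{itL})g_n$, whose $\mathcal E_n$-size is about $\omega_n/n$, and run Gronwall on the rest, which has zero data. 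This gives $\mathcal E_n(r_n(t))\lesssim \frac{\omega_n}{n}(1+\omega_nt)e^{C\omega_nt}$, with no extra factor $n^{\frac14-s}$. That is $\ll n^{-1/4}$ on $\omega_nt\le\epsilon\log n$ provided $\omega_n\le n^{3/4-}$, which holds in your range; so the restriction on $s$ does not enter where you say it does.

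By contrast, the paper never runs Gronwall on the high modes. It writes $u=\kappa h(t)(\phi+z\phi+q)$ with $\deg q>n$. It then bounds $\|q(t)\|_{L^2}$ and $\|q(t)\|_{\dot H^1}$ uniformly in time from mass and energy conservation and the spectral gap $m(m+1)-n(n+1)\gtrsim n$. Only the scalar $z$ is left, and it is controlled by projecting onto $\phi$ and using the mass identity to neutralise the secular $\operatorname{Re} z$ term. That is why the paper can take $\omega_nT_n$ a positive power of $n$. Your corrected argument reaches only $\omega_nT_n\sim\epsilon\log n$, which still satisfies the hypothesis $\omega_nT_n\to\infty$ and suffices for Theorem \ref{thm-illpose2}.
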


\begin{proof}
For convenience, we abbreviate $u_n$, $\phi_n$ and $\omega_n$ to $u$, $\phi$ and $\omega$ respectively. 
		
We prove \eqref{fml-Hs-bound} first. Thanks to the mass and energy conservation laws \eqref{mass} and \eqref{energy}, we have
\begin{equation*}
\|u\|_{L^\infty(\R;L^2(\Bbb S^2))} = \kappa\|\phi\|_{L^2(\Bbb S^2)}\lesssim \kappa n^{-s}, 
\end{equation*}
and
\begin{equation*}
\|u\|_{L^\infty(\R;\dot{H}^1(\Bbb S^2))} \lesssim \kappa\big(\|\phi\|_{\dot{H}^1(\Bbb S^2)}+\|\phi\|_{L^{\alpha+1}(\Bbb S^2)}\big)\lesssim \kappa n^{1-s}.
\end{equation*}
By interpolation, we can find
\begin{equation*}
\|u\|_{L^\infty(\R;\dot{H}^s(\Bbb S^2))} \lesssim \|u\|^{1-s}_{L^\infty(\R;L^2(\Bbb S^2))} \|u\|^{s}_{L^\infty(\R;\dot{H}^1(\Bbb S^2))} \lesssim \kappa,
\end{equation*}
which implies  \eqref{fml-Hs-bound}.
		
It remains to prove \eqref{fml-ansa-rem}.  For convenience, we denote by
\begin{equation*}
h(t) = \exp(it\big( n(n+1)+\kappa^{\alpha-1}\omega \big)),
\end{equation*}
with
\begin{equation*}
\omega = \frac{\|\phi\|^{\alpha+1}_{L^{\alpha+1}(\Bbb S^2)}}{\|\phi\|^2_{L^2(\Bbb S^2)}} \sim n^{(\alpha-1)(\frac{1}{4}-s)}.
\end{equation*}
Thus, $h(t)$ satisfies
\begin{equation*}
\begin{cases}
ih_t + n(n+1) h = \kappa^{\alpha-1} \omega |h|^{\alpha-1}h,\\
h(0)=1.
\end{cases}
\end{equation*}
We write the solution to \eqref{fml-NLS} of the form 
\begin{align}\label{equivalent-1}
u(t) = \kappa h(t)(\phi + w(t)).
\end{align} From Corollary \ref{cor-decomp}, we find that $w(t)$ satisfies $w(0)=0$ and
\begin{equation}\label{fml-w-eq}
iw_t + (\Delta_g+n(n+1)+\kappa^{\alpha-1}\omega) w = \kappa^{\alpha-1}\big( |\phi+w|^{\alpha-1}(\phi+w)-|\phi|^{\alpha-1}\phi+r \big),
\end{equation}
where $\operatorname{deg}r > n$. We can verify that for every $\theta\in\R$
\begin{equation*}
R^*_\theta( |\phi+w|^{\alpha-1}(\phi+w)-|w|^{\alpha-1}w+r) =  e^{in\theta}(|\phi+w|^{\alpha-1}(\phi+w)-|w|^{\alpha-1}w+r).
\end{equation*}
By Corollary \ref{cor-decomp}, we conclude that
\begin{equation*}
w(t) = z(t)\phi+q(t),
\end{equation*}
with $z(t)\in\Bbb C$ and $\operatorname{deg}q(t) > n$. 
		
Next, we turn to control $\sup_{t\in[0,T]}\|q(t)\|_{H^s(\Bbb S^2)}$ and $\sup_{t\in[0,T]}|z(t)|$ with fixed $T>0$.
		
\textbf{Bound for $\sup_{t\in[0,T]}\|q(t)\|_{H^s(\Bbb S^2)}$.} Since $q(t)$ is orthogonal to $\phi$ in $\dot{H}^k(\Bbb S^2)$ for all $k\in\N$, inserting \eqref{equivalent-1} to  \eqref{mass} and \eqref{energy},  we deduce
\begin{equation}\label{fml-mass-zq}
|1+z(t)|^2\|\phi\|^2_{L^2(\Bbb S^2)}+\|q(t)\|^2_{L^2(\Bbb S^2)} = \|\phi\|^2_{L^2(\Bbb S^2)},  
\end{equation} 
and
\begin{equation}\label{fml-energy-zq}
\begin{aligned}
&|1+z(t)|^2\|\phi\|^2_{\dot{H}^1(\Bbb S^2)} + \|q(t)\|^2_{\dot{H}^1(\Bbb S^2)}+ \frac{2}{(\alpha+1)\kappa^2}\|u(t)\|^{\alpha+1}_{L^{\alpha+1}(\Bbb S^2)}= \|\phi\|^2_{\dot{H}^1(\Bbb S^2)}+\frac{2\kappa^{\alpha-1}}{\alpha+1}\|\phi\|^{\alpha+1}_{L^{\alpha+1}(\Bbb S^2)},
\end{aligned}
\end{equation}
respectively. Notice that $\phi\in E_n$, thus for all $k\in\N$
\begin{equation*}
\|\phi\|^2_{\dot{H}^k(\Bbb S^2)} = n^k(n+1)^k\|\phi\|^2_{L^2(\Bbb S^2)}.
\end{equation*}
Using this fact to \eqref{fml-mass-zq} and \eqref{fml-energy-zq}, we can eliminate the terms contain $\|\phi\|^2_{L^2(\Bbb S^2)}$, thus we obtain
\begin{equation*}
\begin{aligned}
\|q(t)\|^2_{\dot{H}^1(\Bbb S^2)}-n^2(n+1)^2\|q(t)\|^2_{L^2(\Bbb S^2)} &\le \frac{2\kappa^{\alpha-1}}{\alpha+1}\|\phi\|^{\alpha+1}_{L^{\alpha+1}(\Bbb S^2)}\lesssim n^{(\alpha-1)(\frac{1}{4}-s)-2s}.
\end{aligned}
\end{equation*}
From spectral resolution, we expand $q(t)$ as
\begin{equation*}
q(t) = \sum_{m>n} q_m(t),
\end{equation*}
where $\operatorname{degree}q_m(t) = m$. Therefore, it gives that
\begin{align*}
&\|q(t)\|^2_{\dot{H}^1(\Bbb S^2)}-n^2(n+1)^2\|q(t)\|^2_{L^2(\Bbb S^2)}=\sum_{m\ge n+1}\big[ m(m+1)-n(n+1) \big]\|q_m(t)\|^2_{L^2(\Bbb S^2)}
\end{align*}
		
A direct calculation implies that there exists a constant  $C$ independent of $n$ such that
\begin{equation*}
\begin{aligned}
m(m+1)-n(n+1) &\ge C n,\,\,\frac{m(m+1)-n(n+1)}{m(m+1)} \ge C n^{-1}.
\end{aligned}
\end{equation*}
Thus, we get
\begin{equation}\label{fml-qt-bound}
\|q(t)\|^2_{L^2(\Bbb S^2)} \lesssim n^{\frac{\alpha-5}{4}-(\alpha+1)s},\,\, \|q(t)\|^2_{\dot{H}^1(\Bbb S^2)} \lesssim n^{\frac{\alpha+3}{4}-(\alpha+1)s}.
\end{equation}
By interpolation, we conclude
\begin{equation*}
\|q(t)\|_{\dot{H}^s(\Bbb S^2)} \lesssim \|q(t)\|^{1-s}_{L^2(\Bbb S^2)} \|q(t)\|^{s}_{\dot{H}^1(\Bbb S^2)} \lesssim n^{\frac{\alpha-5}{8}-(\alpha-1)\frac{s}{2}},
\end{equation*}
which is acceptable.
		
\textbf{Bound for $\sup_{t\in[0,T]}|z(t)|$}. Since $r(t)$ and $q(t)$ are orthogonal to $\phi$,  multiplying $\phi$ and taking the inner product  on both sides of \eqref{fml-w-eq}, we obtain
\begin{equation}\label{fml-z-eq}
\begin{aligned}
iz_t+\omega\kappa^{\alpha-1}z &= \kappa^{\alpha-1}\|\phi\|^{-2}_{L^2(\Bbb S^2)} \big( \langle F(\phi+w)-F(\phi),\phi\rangle\big).
\end{aligned}
\end{equation}
Note that $\alpha\geq3$ and $|\phi+w|^{\alpha-1}$ has second-order smoothness, then we expand it as follows:
\begin{equation*}
\begin{aligned}
|\phi+w|^{\alpha-1} &= |\phi|^{\alpha-1} + \frac{(\alpha-1)}{2}|\phi|^{\alpha-3}(\phi+\overline{\phi})w + (\alpha-1)(\alpha-2)\int_0^1 (1-s)|\phi+sw|^{\alpha-3}w^2\, ds\\
&= |\phi|^{\alpha-1} + \frac{(\alpha-1)}{2}|\phi|^{\alpha-3}(\phi+\overline{\phi})w + \mathcal{O}(|\phi|^{\alpha-3}|w|^2 + |w|^{\alpha-1}).
\end{aligned}
\end{equation*}
Recall that $w(t) = z(t)\phi+q(t)$ and $|\phi|^{\alpha-1}\phi = \omega\phi+r$, we rewrite the nonlinearity satisfies \eqref{fml-z-eq} by
\begin{equation*}
\begin{aligned}
&\Big\langle \big(|\phi+w|^{\alpha-1}(\phi+w) - |\phi|^{\alpha-1}\phi \big),\phi\Big\rangle \\
=&\|\phi\|^2_{L^2(\Bbb S^2)}\Big( \frac{\alpha+1}{2}\omega z+\frac{\alpha-1}{2}\omega \overline{z} \Big) \\
&+ O\Big((|z|^2+|z|^3+|z|^{\alpha-1})\|\phi\|^{\alpha+1}_{L^{\alpha+1}(\Bbb S^2)} + \int_{\Bbb S^2}(|q|^3|\phi|^{\alpha-2}+|q|^2|\phi|^{\alpha-1}+|q|^{\alpha-1}|\phi|^2+|q||r|)d\sigma(x)\Big).
\end{aligned}
\end{equation*}
Since $\alpha-1\in (2,3)$, $|z|^{p-1}$ can be absorbed into $|z|^2+|z|^3$. Moreover, we also get 
\begin{equation*}
\int_{\Bbb S^2}|q|^{\alpha-1}|\phi|^2\, dx \lesssim \int_{\Bbb S^2}|q|^3|\phi|^{\alpha-2} + |q|^2|\phi|^{\alpha-1}\, dx.
\end{equation*}
Therefore, \eqref{fml-z-eq} becomes
\begin{equation*}
\begin{aligned}
iz_t-\frac{\alpha-1}{2}\Big(\omega\kappa^{\alpha-1}z+\omega\kappa^{\alpha-1}\overline{z}\Big) =& \kappa^{\alpha-1}\|\phi\|^{-2}_{L^2(\Bbb S^2)} O\Big((|z|^2+|z|^3)\|\phi\|^{\alpha+1}_{L^{\alpha+1}(\Bbb S^2)} \\
&+ \int_{\Bbb S^2}(|q|^3|\phi|^{\alpha-2}+|q|^2|\phi|^{\alpha-1}+|q||r|)d\sigma(x)\Big).
\end{aligned}
\end{equation*}
Invoking the $L^2$ estimate of $q(t)$, we have
\begin{equation*}
\begin{aligned}
\kappa^{\alpha-1}\|\phi\|^{-2}_{L^2(\Bbb S^2)}\int_{\Bbb S^2}|q|^3|\phi|^{\alpha-2}d\sigma(x) &\lesssim n^{2s}\|q\|^{2}_{L^2(\Bbb S^2)}\|q\|_{\dot{H}^1(\Bbb S^2)}\|\phi\|^{\alpha-2}_{L^\infty(\Bbb S^2)}\\
&\lesssim n^{2s} n^{\frac{\alpha-5}{4}-(\alpha+1)s}n^{\frac{\alpha+3}{8}-\frac{(\alpha+1)s}{2}}n^{(\alpha-2)(\frac{1}{4}-s)}\\
&\lesssim n^{\frac{5\alpha-11}{8}-\frac{5}{2}(\alpha-1)s},
\end{aligned}
\end{equation*}
and
\begin{equation*}
\begin{aligned}
\kappa^2\|\phi\|^{-2}_{L^2(\Bbb S^2)}\int_{\Bbb S^2}|q|^2|\phi|^{\alpha-1}d\sigma(x) &\lesssim n^{2s}\|q\|^2_{L^2(\Bbb S^2)}\|\phi\|^{\alpha-1}_{L^\infty(\Bbb S^2)}
\lesssim n^{\frac{\alpha-3}{2}-2(\alpha-1)s}. 
\end{aligned}
\end{equation*} 
Note that $|\phi|^{\alpha-1}\phi = \omega\phi+r$, multiplying $r$ and taking the  inner product implies that
\begin{equation*}
\|r\|^2_{L^2(\Bbb S^2)} = \int_{\Bbb S^2} |\phi|^{\alpha-1}\phi \overline{r} d\sigma(x) \lesssim \|\phi\|^\alpha_{L^{2\alpha}(\Bbb S^2)} \|r\|_{L^2(\Bbb S^2)}.
\end{equation*}
By using the eigenfunction estimate, we have 
\begin{equation*}
\|r\|_{L^2(\Bbb S^2)} \lesssim \|\phi\|^\alpha_{L^{2\alpha}(\Bbb S^2)} \lesssim n^{\frac{\alpha-1}{4}-s\alpha}.
\end{equation*}
Consequently, the last source term enjoys the bound
\begin{equation*}
\begin{aligned}
\kappa^2\|\phi\|^{-2}_{L^2(\Bbb S^2)}\int_{\Bbb S^2}|q||r|d\sigma(x) &\lesssim n^{2s}\|q\|_{L^2(\Bbb S^2)}\|r\|_{L^2(\Bbb S^2)}\\
&\lesssim n^{2s}n^{\frac{\alpha-5}{8}-\frac{(\alpha+1)s}{2}}n^{\frac{\alpha-1}{4}-s\alpha}\\
&\lesssim n^{\frac{3\alpha-7}{8}-\frac{3(\alpha-1)}{2}s}. 
\end{aligned}
\end{equation*}
Therefore ,we rewrite the equation for $z(t)$ with initial data $z(0)=0$ as follows
\begin{align*}
iz_t=&(\alpha-1)\omega \kappa^{\alpha-1} \Re(z) +\mathcal{O}(\omega|z|^2+\omega|z|^3) \\
&\hspace{20ex}+ 
\begin{cases}
\mathcal{O}\big(n^{\frac{3\alpha-7}{8}-\frac{3\alpha-1)}{2}s}\big),\quad s\in\big(\frac{1}{4}-\frac{1}{4(\alpha-1)},\frac{1}{4}\big)\\
\mathcal{O}\big( n^{\frac{5\alpha-11}{8}-\frac{5}{2}(\alpha-1)s} \big),\quad s\in\big(0,\frac{1}{4}-\frac{1}{4(\alpha-1)}\big] .
\end{cases}
\end{align*} 
First, we  consider the case  $s>\frac{1}{4}-\frac{1}{4(\alpha-1)}$. From mass conservation \eqref{fml-mass-zq}, we estimate
\begin{equation*}
\Big| \frac{\alpha-1}{2}|z|^2+(\alpha-1)\Re(z)\Big|=\frac{\alpha-1}{2}\Big| 1-|1+z|^2 \Big|=\frac{\alpha-1}{2}\|q\|^2_{L^2(\Bbb S^2)}\|\phi\|^{-2}_{L^2(\Bbb S^2)} = \mathcal{O}(n^{\frac{\alpha-5}{4}-(\alpha-1)s}).
\end{equation*}
It follows that $(\alpha-1)\Re(z)\omega \lesssim n^{\frac{\alpha-3}{2}-2(\alpha-1)s}$, which can be absorbed into $n^{\frac{3\alpha-7}{8}-\frac{3(\alpha-1)}{2}s}$. Hence,  $z(t)$ satisfies 
\begin{equation}\label{fml-z-eq-refined}
iz_t=\mathcal{O}(\omega|z|^2+\omega|z|^3+n^{\frac{3\alpha-7}{8}-\frac{3(\alpha-1)}{2}s}),
\end{equation}
with $z(0)=0$. Let $0<\varepsilon\ll1$, we take
\begin{equation*}
T_n = n^{\frac{5(\alpha-1)}{4}s-\frac{5\alpha-9}{16}-\varepsilon},
\end{equation*}
and denote that
\begin{equation*}
M(T) := \sup_{t\in[0,T]}|z(t)|.
\end{equation*}
It is clear that
\begin{equation*}
M(T_n) \lesssim T_n\big(\omega M^2(T_n)+\omega M^3(T_n) + n^{\frac{3\alpha-7}{8}-\frac{3(\alpha-1)}{2}s}\big).
\end{equation*}
Next, we define 
\begin{equation*}
\widetilde{M}(T_n) := T_n^{-1}n^{\frac{3(\alpha-1)}{2}s-\frac{3\alpha-7}{8}} M(T_n).
\end{equation*}
Recall that $\omega=\mathcal{O}(n^{(\alpha-1)(\frac{1}{4}-s)})$ and we arrive at
\begin{equation*}
\widetilde{M}(T_n) \lesssim \big(T_n^2n^{\frac{5\alpha-9}{8}-\frac{5(\alpha-1)}{2}s}\widetilde{M}^2(T_n)+T_n^3n^{(\alpha-2)-4(\alpha-1)s}M^3(T_n) + 1\big), 
\end{equation*}
with $M(0)=0$. From the choice of $T_n$, we have $\widetilde{M}(T_n)$ is uniformly bounded with respect to $n$. Thus, $M(T_n)$ enjoys the bound
\begin{equation*}
\sup_{0\le T\le T_n}M(T) \lesssim n^{-\frac{\alpha-1}{4}s-\frac{\alpha-5}{16}-\varepsilon}.
\end{equation*}
		
Using the bound for $q(t)$ and $z(t)$, we conclude that for some $\delta>0$
\begin{equation*}
\|r_n(t)\|_{\dot{H}^s(\Bbb S^2)} \lesssim \sup_{t\in[0,T]}|z(t)|\|\phi\|_{\dot{H}^s(\Bbb S^2)} + \|q(t)\|_{\dot{H}^s(\Bbb S^2)} \lesssim n^{-\delta}.
\end{equation*}
Since 
\begin{equation*}
\lim\limits_{n\to\infty}\omega_n T_n = \lim\limits_{n\to\infty}n^{\frac{5-\alpha}{16}+\frac{\alpha-1}{4}s-\varepsilon} =\infty,
\end{equation*}
the choice for $T_n$ is acceptable.
		
Now, we turn to $0<s\le \frac{1}{4}-\frac{1}{4(\alpha-1)}$. We may take
\begin{equation*}
T_n = n^{\frac{7}{4}(\alpha-1)s-\frac{7\alpha-13}{16}-\varepsilon},
\end{equation*}
and 
\begin{equation*}
\widetilde{M}(T_n) := T_n^{-1}n^{\frac{5}{2}(\alpha-1)s-\frac{5\alpha-11}{8}} M(T_n).
\end{equation*}
Similarly for $s>\frac{1}{4}-\frac{1}{2(\alpha-1)}$, we obtain
\begin{equation*}
M(T_n)\lesssim n^{-\frac{3}{4}(\alpha-1)s+\frac{3(\alpha-3)}{16}-\varepsilon} \lesssim n^{-\delta},
\end{equation*}
and
\begin{equation*}
\lim\limits_{n\to\infty}\omega_n T_n = \lim\limits_{n\to\infty}n^{\frac{3}{4}(\alpha-1)s-\frac{3(\alpha-3)}{16}+\varepsilon} =\infty.
\end{equation*}
\end{proof}
	
\begin{proof}[Proof of Theorem \ref{thm-illpose2}]
Let $\kappa_n,\kappa_n^\prime\in (0,2)$ to be chosen later and $u_{0,n} = \kappa_n\phi_n(x),u^\prime_{0,n} = \kappa_n^\prime\phi_n(x)$. From Proposition \ref{prop-ansatz}, the solution $u_n$ and $u_n^\prime$ to \eqref{fml-NLS} with initial data $u_{0,n}$ and $u_{0,n}^\prime$ has the form
\begin{equation*}
\begin{aligned}
u_n = \kappa_n\exp\big(-it(n(n+1)+\kappa_n^{\alpha-1}\omega_n)\big) (\phi_n+r_n),\\
u^\prime_n = \kappa^\prime_n\exp\big(-it(n(n+1)+(\kappa^\prime_n)^{\alpha-1}\omega_n)\big) (\phi_n+r^\prime_n).
\end{aligned}
\end{equation*}
		
We take $\kappa_n\equiv1$, $\kappa_n^\prime = 1+\pi (\omega_n T_n)^{2-\alpha}$ where $T_n$ satisfies Proposition \ref{prop-ansatz}. Then, direct calculus yields
\begin{equation*}
\begin{aligned}
\|u^\prime-u\|_{L^\infty([0,\delta];\dot{H}^s(\Bbb S^2))} &\ge \Big| \kappa_n\exp\big(-it(n(n+1)+\kappa_n^2\omega_n)\big)-\kappa^\prime_n\exp\big(-it(n(n+1)+(\kappa^\prime_n)^2\omega_n)\big) \Big|\\
&\hspace{4ex}- \|r_n\|_{L^\infty([0,\delta];\dot{H}^s(\Bbb S^2))}-\|r^\prime_n\|_{L^\infty([0,\delta];\dot{H}^s(\Bbb S^2))}\\
&\ge \kappa_n\Big| \exp\big(-it(n(n+1)+\kappa_n^2\omega_n)\big)-\exp\big(-it(n(n+1)+(\kappa^\prime_n)^2\omega_n)\big) \Big|\\
&\hspace{4ex}-|\kappa_n-\kappa_n^\prime|-o(1)\\
&\ge 1-\pi (\omega_n T_n)^{2-\alpha}-o(1).
\end{aligned}
\end{equation*}
Since $s\in (0,\frac{1}{4})$, we can take sufficiently large $n$ such that $1-\pi\delta^{-1}n^{2s-\frac{1}{2}}-o(1)\sim\frac{k}{2}$ for some large $k\in 2\mathbb{N}+1$. Consequently, we verify $(3)$ in Theorem \ref{thm-illpose2}, which completes the proof of Theorem \ref{thm-illpose2}.
\end{proof}


\section*{Acknowledgments}
 J. Zheng was supported by National key R\&D program of China: 2021YFA1002500 and  NSF grant of China (No. 12271051).

\end{document}